\newcolumntype{C}{>{$}c<{$}}
\newcommand\BC{\mathbb C}
\newcommand\BF{\mathbb F}
\newcommand\BZ{\mathbb Z}
\newcommand\BN{\mathbb N}
\newcommand\BQ{\mathbb Q}
\newcommand\BR{\mathbb R}
\newcommand{\cyc}[1]{\langle #1 \rangle}
\def\BHM#1.#2.#3.#4.{{^{#1}_{#3}\mathcal B^{#2}_{#4}}}
\newcommand\comm\curlyvee
\newcommand\cocomm\curlywedge
\newcommand{\Sp}{\text{Sp}}
\newcommand{\PSp}{\text{PSp}}
\newcommand{\ip}[2]{\langle#1,#2\rangle}
\newcommand{\legendre}[2]{\genfrac{(}{)}{}{}{#1}{#2}}
\DeclareMathOperator{\tr}{tr}
\theoremstyle{plain}
\newtheorem{thm}{Theorem}[section]
\newtheorem{cor}[thm]{Corollary}
\newtheorem{prop}[thm]{Proposition}
\newtheorem{lem}[thm]{Lemma}
\theoremstyle{definition}
\newtheorem{df}[thm]{Definition}
\newtheorem{example}[thm]{Example}
\newtheorem{question}[thm]{Question}
\theoremstyle{remark}
\newtheorem{rem}[thm]{Remark}
\crefname{lem}{Lemma}{Lemmas}
\crefname{thm}{Theorem}{Theorems}
\crefname{cor}{Corollary}{Corollaries}
\crefname{prop}{Proposition}{Propositions}
\crefname{example}{example}{examples}
\crefname{df}{Definition}{Definitions}
\crefname{equation}{equation}{equations}
\numberwithin{equation}{thm}
\def\clap#1{\hbox to 0pt{\hss#1\hss}}
\newcommand\inv{^{-1}}
\newcommand{\kk}{\mathbbm{k}}
\DeclareMathOperator{\Rep}{Rep}
\newcommand{\GL}[2]{\text{GL}_{#1}({#2})}
\newlist{lemenum}{enumerate}{1}
\setlist[lemenum]{label=\roman*), ref=\textup{\thethm~(\roman*)}}
\title[FSZ properties of symplectic groups]{A family of non-FSZ finite symplectic groups}
\author{Marc Keilberg}
\email{keilberg@usc.edu}
\begin{document}
\begin{abstract}
Let $p$ be an odd prime with $p\equiv1\bmod 4$.  Then for any odd power $q$ of $p$ and a positive integer $j$ we show that the groups $\Sp_{p^j+1}(q),\PSp_{p^j+1}(q)$, and their Sylow $p$-subgroups are
non-$FSZ_{p^j}$.
\end{abstract}
\subjclass[2010]{Primary: 20F99; Secondary: 20D15}
\keywords{FSZ groups, finite simple groups, finite symplectic groups, Sylow subgroups}
\maketitle
\section{Introduction}
Fusion categories are a topic of significant interest in mathematical physics, and one of the major invariants they are studied with are the higher Frobenius-Schur indicators \citep{KSZ2,NS07b}.  These generalize the notion of Frobenius-Schur indicators for representations of finite groups, are algebraic integers in a cyclotomic field, and are akin to structure constants for certain algebraic objects within the category \citep{BJT:Rotations}.  In the classical case of finite groups the indicators are in fact always integers, but this need not be true for more arbitrary categories. The question then arises as to which fusion categories share this property of having all integer indicators. A particular subcase is to ask this question for the categorical centers of $\Rep(G)$, with $G$ a finite group. Groups for which this integrality property holds for this categorical center have been called $FSZ$ groups.

\Citet{IMM} were the first to show that non-$FSZ$ groups exist, with the assistance of \citet{GAP4.8.4}.  Several other choices and families of $G$ were checked either by direct computation of the indicators \citep{Co,K,K2}, or via the more combinatorial methods of Iovanov, et al. \citep{K16:p-examples}. A more character theoretic test is provided by \citep{PS16}, which in particular provides a number of substantially more efficient GAP routines.  \Citet{PS16} used these methods to check small order simple groups, and the author subsequently adapted them to check all sporadic simple groups, small order perfect groups, and $\PSp_6(5)$ \citep{K16:Sporadics}.  Absent to date, however, has been an abstract proof that a group is non-$FSZ$ using this character theoretic approach.  In this paper we fill this gap by establishing that certain (projective) symplectic groups and their Sylow subgroups in defining characteristic are non-$FSZ$ via these methods.

The paper begins in \cref{sec:prelims} by recalling and establishing several key properties of finite fields and $FSZ$ groups.  In \cref{sec:choices} we help motivate the choices we make in the remainder of the paper by discussing ways to efficiently establish that a group is non-$FSZ$.  We then begin our investigations into the (projective) symplectic groups in \cref{sec:sylows} by investigating certain properties of their Sylow subgroups in defining characteristic.  We are then able to establish that these Sylow subgroups are non-$FSZ$ under certain technical restrictions on the field and dimension.  We do this via characters in \cref{sec:sylow-characters}, and also provide a combinatorial proof in \cref{sec:sylow-counting}.  We conclude by establishing our main result in \cref{sec:symplectic}, which is that under the same technical restrictions the (projective) symplectic groups are also non-$FSZ$. This proof is done strictly with the character theory ideas, as the author was unable to find a combinatorial proof.  This result provides the first proof that there are infinitely many non-$FSZ$ finite simple groups.  We make various remarks and observations throughout concerning the aforementioned technical restrictions.  In particular, the restriction on the dimension seems largely artificial, while those on the field appear to be essential.  The author was unable to decide these matters definitively, however.

\section{Preliminaries}\label{sec:prelims}
We need to recall several fundamental properties of finite fields and $FSZ$ groups.  Our reference for the properties of finite fields is \citep{IR:NumberTheory}. We let $\BN$ denote the set of positive integers, and all groups will be finite unless otherwise noted.
\subsection{Quadratic Residues}
\begin{df}
  Given a field $\BF$, we say that $x\in\BF$ is a quadratic residue if there exists $y\in\BF$ such that $y^2=x$.

  We let $QR(\BF)$, or $QR(q)$ when $\BF\cong \BF_q$ is a finite field, or just $QR$ when the field is understood from the context, denote the set of all quadratic residues of $\BF$.
\end{df}
We identify the prime subfield of a finite field $\BF_{p^n}$ with the set of integers $\{0,1,...,p-1\}$ in the usual fashion.
\begin{example}
  \begin{align*}
    QR(5)&=\{0,1,4\},\\
    QR(7)&=\{0,1,2,4\},\\
    QR(11)&=\{0,1,3,4,5,9\}.
  \end{align*}
\end{example}
\begin{example}
  Since a finite field is completely determined by its order, any finite field extension $K\supset \BF_q$ with $[K:\BF_q]$ even has $\BF_q\subset QR(K)$.  Subsequently, if $q$ is an even power of $p$ then $QR(q)$ contains the prime subfield $\{0,1,...,p-1\}$ of $\BF_q$.
\end{example}

Our results in \cref{sec:sylow-characters,sec:sylow-counting} will rely on counting how many quadratic residues lie in certain sets.  The last part of the next result answers the question of how many times we can write a given element as a difference of quadratic residues.
\begin{lem}\label{lem:residues}
The following hold when $q$ is a power of an odd prime $p$.
  \begin{lemenum}
    \item  $|QR(q)| = \displaystyle{\frac{q+1}{2}}$.\label{lempart:residues-1}
    \item If $x,y\in QR(q)$, then $xy\in QR(q)$.  If $0\neq x\in QR(q)$ and $y\not\in QR(q)$, then $xy\not\in QR(q)$.
    \item $-1\in QR(q)$ if and only if $p\equiv 1\bmod 4$ or $q$ is an even power of $p$.
    \item  For any $0\neq c\in\BF_q$
    \[ |QR\cap \{x+c\ : \ x\in QR\}| = \begin{cases}
      \frac{q+3}{4},& -1\in QR\text{ and }c\in QR\\
      \frac{q-1}{4},& -1\in QR\text{ and }c\not\in QR\\
      \frac{q+1}{4},& \text{otherwise}
    \end{cases}.\]\label{lempart:residues-3}
  \end{lemenum}
\end{lem}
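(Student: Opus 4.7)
The plan is to dispatch parts (i)--(iii) by elementary cyclic-group considerations and then devote the bulk of the work to (iv), which I would handle via a standard character-sum argument with the quadratic character (Legendre symbol). Specifically, for (i) the squaring map on $\BF_q^*$ has kernel $\{\pm 1\}$ (two elements since $p$ is odd), so its image has size $(q-1)/2$; adjoining $0$ gives $(q+1)/2$. For (ii) the nonzero quadratic residues are the even powers of a generator of $\BF_q^*$ and hence form the unique index-two subgroup. For (iii) the criterion is $4\mid q-1$: if $p\equiv 1 \bmod 4$ then $4\mid p-1\mid q-1$; if $n$ is even then $p^n\equiv 1\bmod 4$ (since $p^2\equiv 1\bmod 8$); and if $p\equiv 3\bmod 4$ with $n$ odd then $p^n\equiv 3\bmod 4$.

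For (iv), let $\chi$ denote the Legendre symbol on $\BF_q^*$ extended by $\chi(0)=0$, so that for nonzero $x$ the indicator of $x\in QR$ is $(1+\chi(x))/2$. Writing
\[
N \;=\; \bigl|\{y\in\BF_q : y\in QR,\ y-c\in QR\}\bigr|,
\]
I would split off the boundary cases $y=0$ and $y=c$, where exactly one of the two arguments is $0$ and thus automatically in $QR$ under our convention; these two cases contribute $\tfrac{1+\chi(-c)}{2}+\tfrac{1+\chi(c)}{2}$. On the bulk range $y\notin\{0,c\}$ both $y$ and $y-c$ are nonzero, so I would expand $\tfrac14(1+\chi(y))(1+\chi(y-c))$ termwise and apply orthogonality $\sum_{y\in\BF_q^*}\chi(y)=0$ to get $\sum_{y\neq 0,c}\chi(y)=-\chi(c)$ and $\sum_{y\neq 0,c}\chi(y-c)=-\chi(-c)$.

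The key remaining piece is the bilinear sum $\sum_{y\neq 0,c}\chi(y)\chi(y-c) = \sum_{y\neq 0,c}\chi(y(y-c))$. Pulling out $\chi(y^2)=1$ and substituting $u=c/y$ (a bijection from $\BF_q^*\setminus\{c\}$ onto $\BF_q^*\setminus\{1\}$) turns this into $\sum_{u\neq 0,1}\chi(1-u)=\sum_{v\neq 0,1}\chi(v)=-1$ by orthogonality again. Assembling everything collapses to the clean identity
\[
N \;=\; \frac{q+1+\chi(c)+\chi(-c)}{4},
\]
and the three stated cases drop out by writing $\chi(-c)=\chi(-1)\chi(c)$ and splitting on the signs of $\chi(-1)$ and $\chi(c)$: when $\chi(-1)=1$ the cross term becomes $2\chi(c)=\pm 2$ and gives $(q\pm 3)/4$ (wait, $(q+3)/4$ or $(q-1)/4$ respectively), while when $\chi(-1)=-1$ the cross term vanishes, yielding $(q+1)/4$.

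The only non-routine step is the substitution $u=c/y$ that evaluates the bilinear character sum to $-1$; beyond that, the main obstacle is purely bookkeeping, namely keeping careful track of the boundary terms $y\in\{0,c\}$ that arise because the convention $0\in QR$ forces the character-theoretic indicator to be corrected at $0$. Once those corrections are folded in, the final case analysis is mechanical.
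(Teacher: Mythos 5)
Your proof is correct, and parts (i)--(ii) are essentially identical to the paper's. For parts (iii) and (iv) you take genuinely different routes. In (iii), the paper argues via Wilson's lemma for the $p\equiv 1\bmod 4$ case and via irreducibility of $x^2+1$ over $\BF_p$ (hence $\BF_q\supseteq\BF_{p^2}$) for the $p\equiv 3\bmod 4$ case; you instead reduce to the criterion $4\mid q-1$ and chase parities of $p^n\bmod 4$. Your route is shorter, though it leans on the criterion $-1\in QR(q)\iff 4\mid q-1$, which you invoke without proof — it does follow instantly from part (ii) (the element $-1$ of order $2$ lies in the index-two subgroup of the cyclic group $\BF_q^\times$ iff $(q-1)/2$ is even), so this is a minor omission rather than a gap. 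In (iv), the paper performs a direct combinatorial count: it enumerates the $q-1$ solutions $(x,y)$ of $(x+y)(x-y)=c$ and then carefully passes to distinct pairs $(x^2,y^2)$, with special treatment of the cases $x=0$ or $y=0$. You instead use the quadratic-character indicator $(1+\chi(\cdot))/2$ and reduce to the classical evaluation $\sum_{y}\chi(y)\chi(y-c)=-1$, reaching the clean closed form $N=(q+1+\chi(c)+\chi(-c))/4$ from which the three cases fall out by splitting on $\chi(-1)$ and $\chi(c)$; I verified your boundary corrections at $y\in\{0,c\}$ and the final case analysis, and they are correct. The character-sum argument is slicker and packages the answer in a single formula, while the paper's count is more elementary and self-contained; both are standard and both give identical case conditions (e.g., ``$\pm c\in QR$'' is equivalent to ``$\chi(-1)=\chi(c)=1$'').
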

\begin{proof}
  For the first claim, the map $a\mapsto a^2$ defines a group endomorphism of $\BF_q^\times$, the multiplicative group of the field.  This group has order $q-1$, and the endomorphism has kernel $\{\pm 1\}$, which gives the first claim.  The second claim then also follows, as $QR(q)\setminus\{0\}$ is an index 2 subgroup of $\BF_q^\times$.

  For the third claim, it is well-known that $-1\in QR(p)$ if and only if $p\equiv 1\bmod 4$; indeed, when $p\equiv 1\bmod 4$ then by Wilson's lemma $(p-1)! = ((p-1)/2)!^2\equiv-1\bmod p$.  So if $p\equiv 3\bmod 4$, then $x^2+1$ is irreducible over $\BF_p$, and so $-1\in QR(q)$ implies that $\BF_q$ is a splitting field for $x^2+1$. Therefore $\BF_q$ is an extension of $\BF_{p^2}$, from which it follows that $q$ is an even power of $p$, as desired.
  
  We now consider the fourth claim. The cardinality desired is given by counting the number of distinct ordered pairs $(x^2,y^2)$ where $(x,y)\in\BF_q\times\BF_q$ satisfy $x^2=y^2+c$.  We first count the number of such pairs $(x,y)$, from which we can then obtain the number of distinct pairs $(x^2,y^2)$.
  
  We have that $x^2=y^2+c$ if and only if $x^2-y^2=(x+y)(x-y)=c$.  Since $c\neq 0$, $x+y$ and $x-y$ are both non-zero.  Moreover, if $x+y=a\in\BF_q^\times$, then $x-y=ca\inv$, and we can then solve for $x$ and $y$.  Therefore the set of all pairs $(x,y)$ satisfying $x^2=y^2+c$ has cardinality $q-1$.  Given any such solution $(x,y)$, then $(\pm x,\pm y)$ are also solutions, and are exactly the solutions yielding the pair $(x^2,y^2)$. If both $x,y$ are non-zero, then since $q$ is odd this identifies exactly four $(x,y)$ pairs.  We then readily observe that $y=0$ occurs in a solution if and only if $c\in QR$, and there are two values of $x$ such that $x^2=c$.  Similarly, $x=0$ occurs in a solution if and only if $-c\in QR$, and there are two values of $y$ such that $y^2=-c$.
  
  Therefore, if $\pm c\in QR$ then there are $(q-1)-4=q-5$ solutions $(x,y)$ with both values non-zero, which yields $(q-5)/4$ distinct pairs of values $(x^2,y^2)$. The solutions with $x=0$ or $y=0$ add the pairs $(c,0),(0,-c)$.  Combined, the desired cardinality when $\pm c\in Q$ is precisely $2+(q-5)/4 = (q+3)/4$.  By the second part of the result, $\pm c\in QR$ if and only if $-1,c\in QR$. Therefore we obtain one of the three stated cases.
  
  When neither $c$ nor $-c$ are in $QR$, then there are no solutions with $x=0$ or $y=0$, and so we obtain $(q-1)/4$ distinct pairs $(x^2,y^2)$. By the second part of the result, we have that $c,-c\not\in QR$ if and only if $-1\in QR$ and $c\not\in QR$, yielding the second of the three stated cases.
  
  In all other cases we have $(q-1)-2=q-3$ solutions $(x,y)$ with both values non-zero; and exactly two solutions with one value zero, both yielding the same pair of squares.  Therefore, the desired cardinality is $(q-3)/4 + 1 = (q+1)/4$, as claimed.
  
  This completes the proof.
\end{proof}

\subsection{Additive characters of fields}\label{sub:field-chars}
Given a field $\BF$, we let $\BF^+$ denote its underlying additive group.  The results of \cref{sec:sylow-characters} are dependent upon the distribution of quadratic residues in pre-images of group homomorphisms $\BF_{p^n}^+\to\BF_p^+$.  In order to state our main result on this, we need to review a number of definitions.

There is a canonical group homomorphism $\tr\colon\BF_{p^n}^+\to\BF_p^+$, called the trace, defined by $\tr(x) = \sum_{i=0}^{n-1}x^{p^i}$.  More generally, traces are defined for any finite Galois extension via summing Galois conjugates.  There is a bijection between $\BF_{p^n}$ and group homomorphisms $\BF_{p^n}^+\to\BF_p^+$ given by $z\mapsto (x\mapsto \tr(zx))$.  The trivial homomorphism corresponds to $z=0$, and all other homomorphisms are non-trivial (and so surjective).  We also have a canonical character $e_{p^n}\colon\BF_{p^n}^+\to\BC^\times$ given by $x\mapsto \exp(2\pi i \tr(x)/p)$.

The Legendre symbol $\legendre{x}{q}$ for a field of order $q$ and $x\in\BF_q$ is defined by
\begin{equation}
\legendre{x}{q} =
  \begin{cases}
    0,& x=0\\
    1,& 0\neq x\in QR(q)\\
    -1,&\mbox{otherwise}
  \end{cases}.
\end{equation}
The Legendre symbol is multiplicative, and in particular restricts to a character $\BF_q^\times\to\BC^\times$.

We also define
\begin{equation}
  G(q) = \sum_{x\in\BF_q} \legendre{x}{q}e_q(x).
\end{equation}
These are known as Gauss sums, and satisfy
\begin{equation}
  G(p) = \begin{cases}
    \sqrt{p},& p\equiv 1\bmod 4\\
    i\sqrt{p},& p\equiv 3\bmod 4
  \end{cases},
\end{equation}
and $G(p^n) = -(-G(p))^n$.

\begin{thm}\label{thm:char-residues}
  Let $p$ be an odd prime, $n\in\BN$, and $z\in\BF_{p^n}^\times$.  Let $\tr_z\colon\BF_{p^n}^+\to\BF_p^+$ be the group homomorphism given by $\tr_z(x)=\tr(zx)$.
  \begin{enumerate}
    \item If $n$ is odd, then
    \[ |\ker(\tr_z)\cap QR(p^n)| = \frac{p^{n-1}+1}{2}.\]
    \item If $n$ is even, then
    \[ |\ker(\tr_z)\cap QR(p^n)| = \frac{p^n + (p-1)\legendre{z}{p^n}G(p^n)+p}{2p}.\]
    \item If $n$ is odd and $y\in \BF_p^\times$ then
    \[ |\tr_z\inv(y)\cap QR(p^n)| = \frac{p^n+\legendre{z}{p^n}\legendre{y}{p}G(p)G(p^n)}{2p}.\]
    \item If $n$ is even and $y\in\BF_p^\times$ then
    \[ |\tr_z\inv(y)\cap QR(p^n)| = \frac{p^n- \legendre{z}{p^n}G(p^n)}{2p}.\]
  \end{enumerate}
\end{thm}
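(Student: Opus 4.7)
The plan is to evaluate $N(y):=|\tr_z\inv(y)\cap QR(p^n)|$ via additive character orthogonality together with a Gauss sum identity. For $x\neq 0$, the indicator of $x\in QR(p^n)$ is $(1+\legendre{x}{p^n})/2$, and the indicator of $\tr_z(x)=y$ is $p\inv\sum_{t\in\BF_p}e_p(t(\tr_z(x)-y))$; since $\tr$ is $\BF_p$-linear, the latter factors as $p\inv\sum_t e_{p^n}(tzx)e_p(-ty)$. Taking the product of these indicators, summing over $x\neq 0$, adding a correction of $1$ when $y=0$ to account for $x=0$, and interchanging the order of summation yields
\[ N(y)=\delta_{y,0}+\frac{1}{2p}\sum_{t\in\BF_p}e_p(-ty)\sum_{x\in\BF_{p^n}^\times}\bigl(1+\legendre{x}{p^n}\bigr)e_{p^n}(tzx). \]

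The two inner sums are standard. When $t=0$ the inner sum equals $p^n-1$, contributing $(p^n-1)/(2p)$. For $t\neq 0$ one has $\sum_{x\neq 0}e_{p^n}(tzx)=-1$ by vanishing of a nontrivial full additive character sum, and $\sum_{x\neq 0}\legendre{x}{p^n}e_{p^n}(tzx)=\legendre{tz}{p^n}G(p^n)$ by substituting $u=tzx$ in the definition of $G(p^n)$ (together with multiplicativity of the Legendre symbol). It then remains to evaluate the two $t$-sums
\[ S(y):=\sum_{t\in\BF_p^\times}e_p(-ty)\qquad\text{and}\qquad T(y):=\sum_{t\in\BF_p^\times}\legendre{t}{p^n}e_p(-ty). \]
Here $S(y)=p-1$ when $y=0$ and $S(y)=-1$ otherwise, while the parity of $n$ enters only through $T(y)$.

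If $n$ is even, then $\BF_p^\times\subset QR(p^n)$ (as noted in the example preceding \cref{lem:residues}), so $\legendre{t}{p^n}=1$ and $T(y)=S(y)$. If $n$ is odd, then the computation $t^{(p^n-1)/2}=\legendre{t}{p}^{1+p+\cdots+p^{n-1}}$, in which the exponent is a sum of $n$ odd numbers and hence odd, shows that $\legendre{t}{p^n}=\legendre{t}{p}$ for $t\in\BF_p^\times$; then $T(y)$ reduces to a classical Gauss sum, evaluating to $0$ when $y=0$ and to $\legendre{-y}{p}G(p)$ when $y\neq 0$ via the substitution $s=-ty$ in the definition of $G(p)$.

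Assembling these ingredients in each of the four cases $(n\bmod 2,\ y=0\text{ or }y\neq 0)$ produces the four stated formulas. No step is particularly deep, so the main obstacle is careful bookkeeping: remembering to include $x=0$ when $y=0$, and correctly tracking the sign $\legendre{-1}{p}$ that appears in the odd-$n$, nonzero-$y$ case, which must be matched against the form of the stated answer (using, if needed, the identity $G(p^n)=-(-G(p))^n$ to repackage sign factors).
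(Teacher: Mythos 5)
Your plan is correct and is essentially the paper's own argument: both detect the fiber $\tr_z\inv(y)$ via additive character orthogonality over $\BF_p$, detect membership in $QR$ via the factor $(1+\legendre{x}{p^n})/2$, and reduce the inner $x$-sums to Gauss sums; the paper's introduction of a preimage $\tilde y\in\tr_z\inv(y)$ and its setup of one auxiliary double sum evaluated in two ways are cosmetic reformulations of your direct indicator-function expansion.

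The $\legendre{-1}{p}$ you flag is a genuine issue, and your version is the correct one. Carrying your plan through in the odd-$n$, $y\neq 0$ case yields
\[ |\tr_z\inv(y)\cap QR(p^n)| = \frac{p^n+\legendre{z}{p^n}\legendre{-y}{p}G(p)G(p^n)}{2p}, \]
which differs from the paper's item (3) by the factor $\legendre{-1}{p}$. The slip in the paper's proof is the claim that $\sum_{a=1}^{p-1}\legendre{a}{p}e_p(-ay)$ equals $\legendre{y}{p}G(p)$; the substitution $b=-ay$ in fact gives $\legendre{-y}{p}G(p)$. A quick sanity check with $p=7$, $n=1$, $z=y=1$: the true count is $|\{1\}\cap QR(7)|=1$, the paper's formula gives $(7+(i\sqrt{7})^2)/14=0$, and the corrected formula gives $(7+\legendre{-1}{7}(i\sqrt{7})^2)/14=1$. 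This discrepancy is harmless downstream, since \cref{thm:sylow-fsz} only invokes these formulas under the hypothesis $p\equiv 1\bmod 4$, where $\legendre{-1}{p}=1$; but your careful bookkeeping catches a real error in the stated formula.
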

\begin{proof}
  A proof for the first two parts is given in \citep{Jyrki}.  We will adapt this proof to cover the remaining cases.

  Let notation and assumptions be as in the statement, and define $q=p^n$.

  Fix any $y\in\BF_p$. Since $\tr_z$ is surjective the pre-image $\tr_z\inv(y)$ is non-empty. Observe that $\tr_z\inv(0)=\ker(\tr_z)$. Let $\tilde{y}\in\BF_q$ be any value such that $\tr_z(\tilde{y})=y$.

  We consider the sum
  \begin{equation}\label{eq:qr-char-sum}
    \sum_{a=0}^{p-1} \sum_{x\in\BF_{q}} (1+\legendre{x}{q})e_q(az(x-\tilde{y})).
  \end{equation}
  The terms with $x=0$ in the above contribute the value
  \[ \sum_{a=0}^{p-1} e_q(-az\tilde{y}) = p \delta_{y,0}\]
  by orthogonality of characters.  For the terms with $x\neq 0$, using the definition of the Legendre symbol and orthogonality of characters we compute that
  \begin{align*}
    \sum_{a=0}^{p-1} \sum_{x\in\BF_{q}^\times} (1+\legendre{x}{q})e_q(az(x-\tilde{y})) &= 2\sum_{0\neq x\in QR(q)}\sum_{a=0}^{p-1}e_q(az(x-\tilde{y}))\\
    &= 2 \sum_{0\neq x\in QR(q)} \begin{cases}
      p,& \tr_z(x)=y\\
      0,& \mbox{ otherwise}
    \end{cases}\\
    &= 2p|\tr_z\inv(y)\cap (QR(q)\setminus\{0\})|.
  \end{align*}
  As $0$ is in the preimage $\tr_z\inv(y)$ if and only if $y=0$, we have
  \[ \sum_{a=0}^{p-1} \sum_{x\in\BF_{q}} (1+\legendre{x}{q})e_q(az(x-\tilde{y})) = 2p|\tr_z\inv(y)\cap QR(q)| - p\delta_{y,0}.\]
  We may now proceed to compute the cardinalities in question by evaluating \cref{eq:qr-char-sum} in a different way.

  The terms with $a=0$ in \cref{eq:qr-char-sum} contribute the value
  \begin{equation}
    \sum_{x\in \BF_q}(1+\legendre{x}{q}) = p^n
  \end{equation}
  by definition of the Legendre symbol and \cref{lempart:residues-1}.

  On the other hand, for $1\leq a\leq p-1$ we have
  \begin{align*}
    \sum_{x\in \BF_q}e_q(az(x-\tilde{y}))(1+\legendre{x}{q}) &= \sum_{x\in\BF_q}e_q(az(x-\tilde{y})) + \sum_{x\in\BF_q} \legendre{x}{q} e_q(az(x-\tilde{y}))\\
    &=\sum_{x\in\BF_q} \legendre{x}{q} e_q(az(x-\tilde{y})
  \end{align*}
  by orthogonality of characters. So by a variable substitution, the multiplicativity of the Legendre symbol, and the identities for Gauss sums we conclude that
  \begin{align*}
    \sum_{x\in \BF_q}e_q(az(x-\tilde{y}))(1+\legendre{x}{q}) &= e_q(-az\tilde{y}) \sum_{x\in\BF_q} \legendre{x}{q}e_q(azx)\\
    &= e_q(-az\tilde{y}) \legendre{a}{q}\legendre{z}{q} \sum_{x\in\BF_q} \legendre{x}{q}e_q(x)\\
    &= \legendre{a}{q}e_q(-az\tilde{y})\legendre{z}{q} G(p^n).
  \end{align*}

  Combining results so far, it follows that
  \begin{equation}\label{eq:qr-card-all}
    2p | \tr_z\inv(y)\cap QR(q)| = p^n + \delta_{y,0}p + \Big(\sum_{a=1}^{p-1}\legendre{a}{q}e_q(-az\tilde{y})\Big)G(p^n)\legendre{z}{q}.
  \end{equation}

  To obtain the desired formulas, we need to evaluate the remaining sum.

  When $n$ is even we have that $\legendre{a}{q}=1$ for all $1\leq a\leq p-1$, so that
  \begin{equation}\label{eq:qr-card-even}
    2p | \tr_z\inv(y)\cap QR(q)| = p^n + (\delta_{y,0}p-1)G(p^n)\legendre{z}{q} + \delta_{y,0}p.
  \end{equation}
  This then gives the desired formulas in the case $n$ is even.

  On the other hand, when $n$ is odd we have $\legendre{a}{q}=\legendre{a}{p}$ for all $1\leq a\leq p-1$, so since $\tr$ is $\BF_p$-linear we have
  \begin{equation}
    \sum_{a=1}^{p-1}\legendre{a}{q}e_q(-az\tilde{y}) = \sum_{a=1}^{p-1}\legendre{a}{p}e_p(-ay).
  \end{equation}
  Another classical result of Gauss shows that this last value is precisely $\legendre{y}{p}G(p)$.  Thus when $n$ is odd we have
  \begin{equation}\label{eq:qr-card-odd}
    2p | \tr_z\inv(y)\cap QR(q)| = p^n +\legendre{y}{p}\legendre{z}{q}G(p)G(p^n) + \delta_{y,0}p.
  \end{equation}
  This gives the desired formulas when $n$ is odd, and so completes the proof.
\end{proof}

\subsection{FSZ Groups}\label{sub:FSZ}
The notion of $FSZ$ groups was introduced by \citet{IMM}. These originate from deciding when certain invariants \citep{KSZ2} of a representation category associated to a given group \citep{DPR} are in fact all integers (the 'Z' in $FSZ$). These invariants constitute generalizations of the classical Frobenius-Schur indicators of a finite group (giving the 'FS' in $FSZ$).  While these invariants have been computed explicitly by hand or with a computer for several (families of) groups \citep{K,K2,PS16,Co}, there are ways of deciding when these invariants are integers or not without explicit calculation of the invariants.  In particular, \citet{IMM} showed that we can use the following definition.

\begin{df}\label{df:FSZ}
  Let $G$ be a finite group.  For any $m\in\BN$ and $u,g\in G$ define $G_m(u,g) = \{ a\in G\ | \ a^m = (au)^m = g\}$.

  We say that $G$ is $FSZ_m$ if for all $n\in\BN$ coprime to $|G|$ we have $|G_m(u,g)|=|G_m(u,g^n)|$ for all $u,g\in G$.

  We say that $G$ is $FSZ$ if it is $FSZ_m$ for all $m\in\BN$.
\end{df}
Indeed, $G$ is $FSZ$ if and only if it is $FSZ_m$ for every $m$ dividing $\exp(G)$, the exponent of $G$.  Moreover, $G_m(u,g)=\emptyset$ if $u\not\in C_G(g)$, and in all cases $G_m(u,g)\subseteq C_G(g)$.

We are often concerned with deciding for which $u$ or $g$ the equality of sets from \cref{df:FSZ} can fail (often when given the other one).  It helps to ease the discussion of such matters if we introduce the following.
\begin{df}\label{df:FSZ2}
Let $m\in\BN$.

We say $G$ is non-$FSZ_m$ \textit{at} $g\in G$ if for some $u\in G$ and some $n\in\BN$ with $(n,|G|)=1$ we have $|G_m(u,g)|\neq|G_m(u,g^n)|$.

We say $G$ is non-$FSZ_m$ \textit{over} $u\in G$ if for some $g\in G$ and some $n\in\BN$ with $(n,|G|)=1$ we have $|G_m(u,g)|\neq |G_m(u,g^n)|$.
\end{df}
\begin{rem}
  It is a matter of preference or convenience if one chooses to emphasize the $g$ or the $u$ term in $|G_m(u,g)|$.  If one is primarily interested in the modules yielding non-integer indicators, then it is the $u$ value that we are most interested in.  However most tests of the $FSZ$ properties, both in theory and practice, focus on computations in $C_G(g)$, and so the value $g$ is usually of primary interest.
\end{rem}

\begin{example}\label{ex:IMM}
\citet{IMM} established that several types of groups are $FSZ$ using the preceding definition, of which we list a few.
  \begin{itemize}
    \item Symmetric and alternating groups (see also \citep{Etingof:SymFSZ}).
    \item All regular $p$-groups.
    \item The irregular $p$-group $\BZ_p\wr_r\BZ_p$.
    \item $\text{PSL}_2(q)$ for any prime power $q$.
    \item The Matthieu groups.
    \item Any group with square-free exponent.
  \end{itemize}
They also showed that there are non-$FSZ_5$ groups of order $5^6$ using \citet{GAP4.8.4}, thus proving that non-$FSZ$ groups exist.
\end{example}
\begin{example}
  The author has constructed several other families of $FSZ$ and non-$FSZ$ $p$-groups for $p>3$ in \citep{K16:p-examples,K17:wreath}.  Whether or not non-$FSZ$ 2-groups or 3-groups exist remains an open question.
\end{example}

These examples all relied on computing cardinalities or otherwise exhibiting bijections between the sets from \cref{df:FSZ}. \citet{PS16} has developed a more character-theoretic criterion for the $FSZ$ properties, which we now detail.

\begin{df}\label{df:psi-beta}
  Let $K$ be a group, $z\in K$, and $\chi$ an irreducible (complex) character of $C_K(z)$.

  For $m\in \BZ$ we define the class function $\psi_{m,z}\colon C_K(z)\to\{0,1\}$ by $\psi_{m,z}(x)=1$ if $x^m=z$, and $\psi_{m,z}(x)=0$ otherwise.

  We also define
  \begin{align}
    \beta_m(\chi,z) &=\| \sum_{\substack{a\in C_K(z)\\a^m=z}}\chi(a)\|^2\\
    &= |C_K(z)|^2 \|\ip{\chi}{\psi_m^z}\|^2
  \end{align}
\end{df}

The following then gives the criterion we wish to use.
\begin{thm}[Schauenburg's Criterion]\citep[Theorem 8.4]{PS16}\label{thm:peter}
Let $K$ be a group and $m\in\BN$.  Then $K$ is $FSZ_m$ if and only if for all $z\in K$ and irreducible characters $\chi$ of $C_K(z)$ we have $\beta_m(\chi,z)\in\BQ$.
\end{thm}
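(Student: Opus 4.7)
The plan is to establish the biconditional via a Fourier-type decomposition of the counting function $u\mapsto|K_m(u,z)|$ on the centralizer $C:=C_K(z)$, combined with the Galois behavior of $\beta_m(\chi,z)$ under the power map $z\mapsto z^n$.

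First I would derive a closed-form expression for $|K_m(u,z)|$ in terms of the $\beta_m$. Observing that $K_m(u,z)\subseteq C$ and that $|K_m(u,z)|=\sum_{a\in C}\psi_{m,z}(a)\psi_{m,z}(au)$ is a class function of $u$ (with $\psi_{m,z}$ itself a class function on $C$), one expands $\psi_{m,z}=\sum_{\chi}\langle\psi_{m,z},\chi\rangle\chi$ over $\Irr(C)$ and applies the Schur orthogonality identity
\begin{equation*}
  \sum_{a\in C}\chi(a)\chi'(au)\;=\;\delta_{\chi',\overline{\chi}}\,\frac{|C|}{\chi(1)}\,\chi'(u).
\end{equation*}
Since $\psi_{m,z}$ is real-valued, $\langle\psi_{m,z},\overline{\chi}\rangle=\overline{\langle\psi_{m,z},\chi\rangle}$, so the cross terms in the double sum collapse to the diagonal $\chi'=\overline\chi$ and yield
\begin{equation*}
  |K_m(u,z)|\;=\;\frac{1}{|C|}\sum_{\chi\in\Irr(C)}\frac{\beta_m(\chi,z)}{\chi(1)}\,\overline{\chi}(u),
\end{equation*}
using $|\langle\psi_{m,z},\chi\rangle|^2=\beta_m(\chi,z)/|C|^2$ from \cref{df:psi-beta}.

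Next I would analyze how $\beta_m(\chi,z)$ transforms under $z\mapsto z^n$ for $(n,|K|)=1$. Since $n$ is then coprime to $|z|$ and to $|C|$, we have $C_K(z^n)=C_K(z)$, and the bijection $a\mapsto a^n$ on $C$ sends $\{a:a^m=z\}$ onto $\{a:a^m=z^n\}$. Letting $\sigma_n$ be the element of $\text{Gal}(\BQ(\zeta_{|K|})/\BQ)$ with $\sigma_n(\zeta)=\zeta^n$, the identity $\chi(a^n)=\sigma_n(\chi(a))$ together with the fact that cyclotomic Galois groups commute with complex conjugation gives $\beta_m(\chi,z^n)=\sigma_n(\beta_m(\chi,z))$.

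The two pieces now combine. By linear independence of $\Irr(C)$, the equation $|K_m(u,z)|=|K_m(u,z^n)|$ for all $u\in C$ is equivalent to $\beta_m(\chi,z)=\beta_m(\chi,z^n)$ for every $\chi\in\Irr(C)$, hence by the previous step to $\sigma_n(\beta_m(\chi,z))=\beta_m(\chi,z)$. As $n$ ranges over residues coprime to $|K|$, $\sigma_n$ ranges over all of $\text{Gal}(\BQ(\zeta_{|K|})/\BQ)$; since $\beta_m(\chi,z)$ already lies in this cyclotomic field (being a sum of products of character values), invariance under every $\sigma_n$ is equivalent to $\beta_m(\chi,z)\in\BQ$.

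The main technical hurdle is the Fourier step: one must correctly isolate the pairing $\chi'=\overline\chi$ coming out of Schur orthogonality (rather than $\chi'=\chi$), track the factor $|C|/\chi(1)$, and exploit the reality of $\psi_{m,z}$ to collapse $\langle\psi_{m,z},\chi\rangle\,\langle\psi_{m,z},\overline{\chi}\rangle$ into $|\langle\psi_{m,z},\chi\rangle|^2$. Once that formula is in hand, the Galois calculation and the final assembly are essentially bookkeeping, modulo the sanity check that $a\mapsto a^n$ really is a well-defined bijection on $C$ carrying $\{a^m=z\}$ onto $\{a^m=z^n\}$.
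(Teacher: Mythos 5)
Your proof is correct, and it is essentially the argument underlying Schauenburg's result. Note that the paper itself does not prove this theorem; it is stated purely by citation to \citep[Theorem 8.4]{PS16}, with a remark that the cited result is more general and that the definition of $\beta_m$ there differs by a rational factor. So there is no in-paper proof to compare against, and your task was effectively to reconstruct the argument from the cited source. You have done so faithfully: the Fourier decomposition of $u\mapsto|K_m(u,z)|$ over $\Irr(C_K(z))$ (with the $\beta_m(\chi,z)/\chi(1)$ coefficients), the reduction via linear independence of irreducible characters, the Galois action $\sigma_n$ on character values intertwining the $n$-th power map on solutions, and the fixed-field characterization of $\BQ$ inside $\BQ(\zeta_{|K|})$ are exactly the right ingredients.

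Two small points worth being explicit about in a final write-up, neither of which is a gap. First, the definition of $FSZ_m$ quantifies over all $u\in K$, not just $u\in C_K(z)$; you should note (as the paper does after \cref{df:FSZ}) that $K_m(u,z)=\emptyset$ when $u\notin C_K(z)$, and since $C_K(z^n)=C_K(z)$ when $(n,|K|)=1$, both sides of the required equality vanish automatically for such $u$, so the orthogonality argument on $C_K(z)$ suffices. Second, in the Schur-orthogonality step the convolution identity you invoke is $\sum_{a\in C}\chi(a)\chi'(au)=\delta_{\chi',\overline\chi}\,\frac{|C|}{\chi(1)}\,\chi'(u)$; this is obtained from the central-element computation $\sum_a\phi(a)\chi'(au)=\frac{\chi'(u)}{\chi'(1)}\sum_a\phi(a)\chi'(a)$ together with $\sum_a\chi(a)\chi'(a)=|C|\,\delta_{\chi',\overline\chi}$, and you have applied it correctly, including the crucial observation that $\psi_{m,z}$ being real-valued collapses $\langle\psi_{m,z},\chi\rangle\langle\psi_{m,z},\overline\chi\rangle$ to $|\langle\psi_{m,z},\chi\rangle|^2$, which is what makes $\beta_m(\chi,z)$ appear.
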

\begin{rem}
  \citep[Theorem 8.4]{PS16} is more general than what we have stated here, and our definition of $\beta$ differs by a rational multiple from the one used in \citep[Lemma 3.4]{PS16}.  We have stated the result and definition as above to more easily serve our more limited needs.
\end{rem}

\citet{PS16} used this to construct a very useful \citet{GAP4.8.4} algorithm to test if a group is $FSZ$ or not.  To date, the preceding theorem has not been used to abstractly prove a group is non-$FSZ$.  Indeed, all current proofs that a group is $FSZ$ or non-$FSZ$ that do not rely on a computer calculation are ultimately all based in investigating \cref{df:FSZ}. This paper will fill this gap and provide several infinite families of non-$FSZ$ groups that utilize Schauenburg's criterion.  Indeed, while the Sylow subgroups we will consider admit a combinatorial proof using \cref{df:FSZ}, we know of no proof that doesn't utilize Schauenburg's criterion for the groups $\Sp_{2n}(q)$ and $\PSp_{2n}(q)$ that are considered herein.  This also provides the first known infinite family of non-$FSZ$ simple groups.

\begin{example}
  The aforementioned GAP algorithm was used \citep{PS16} to show that the simple groups $G_2(5)$ and $HN$ are non-$FSZ_5$.  Shortly afterwards, the author \citep{K16:Sporadics} showed the only other non-$FSZ$ sporadic simple groups were $Ly$, $B$, and $M$.  This also relied on GAP calculations, and made mixed use of Schauenburg's criterion and \cref{df:FSZ}.
\end{example}

While Schauenburg's criterion seems the natural thing to prefer at an abstract level, we note that there is information that a proof a group is non-$FSZ$ utilizing \cref{df:FSZ} provides which a proof using \cref{thm:peter} does not, in the following sense.  In the sets $G_m(u,g)$ the module that would yield the non-integer indicator is associated in a natural way to the element $u$, and not the element $g$. We thus obtain at least partial information about a particular module in the category in question.  In Schauenburg's criterion a value $z$ such that $\beta_m(\chi,z)\not\in\BQ$ tells us that for some $u\in G$ and $n\in\BN$ coprime to $|G|$ we have $|G_m(u,z)|\neq |G_m(u,z^n)|$, but the criterion provides no clear way of deciding which choices of $u$ or $n$ will demonstrate this, and so provides no information about any specific modules.

Since we are only interested in the rationality of $\beta_m(\chi,z)$ here, and not the exact values, we adopt the following notation.
\begin{df}
  We define a relation $\sim_\BQ$ on $\BR$ by $x\sim_\BQ y$ if and only there exists $s,t\in\BQ$ with $t\neq 0$ and $x=s+t y$.  This is an equivalence relation on $\BR$.
\end{df}
\begin{example}
  The rational numbers form one equivalence class under $\sim_\BQ$.  Any set of elements in $\BR$ which is linearly independent over $\BQ$ has the property that all of its elements are in distinct equivalence classes under $\sim_\BQ$.  So $1,\sqrt{2},$ and $\sqrt{3}$ all represent distinct equivalence classes under $\sim_\BQ$.
\end{example}
We have chosen the above definition to clearly reflect what our procedure will be later: to show that $\beta_m(\chi,z)$ is irrational by expressing $\beta_m(\chi,z) = s_1+t_1\gamma_1$, $\gamma_1=s_2+t_2\gamma_2$, etc., without having to explicitly the compute the rational values $s_i,t_i$, until finally reaching a value $\gamma_i$ whose rationality we can decide.  It will actually not be terribly difficult to explicitly determine all of the suppressed rational constants, but this is one more piece of bookkeeping we opt to not be distracted by here.
\section{Choosing good characters}\label{sec:choices}
The principle difficulty in applying either \cref{df:FSZ} or \cref{thm:peter} to show a group is non-$FSZ$ is deciding which possibilities to test first. While brute force methods can yield a number of successes \citep{PS16,IMM}, they are only suitable to a rather small selection of groups.  The author proposed several simple guiding principles for applying these tests in a more efficient manner in \citep{K16:Sporadics}.  We will propose several similar things in this section, with an eye towards motivating the choices we will make in the subsequent sections.

Our first observation is used to single out linear characters as good first candidates for applying Schauenburg's criterion.
\begin{lem}\label{lem:lin-expand}
  If $G$ is group with $z\in Z(G)$, then for any $\chi\in\widehat{G}$ and $m\in\BN$ we have
  \begin{align}\label{eq:beta-lin}
    \beta_m(\chi,z) = \sum_{u\in G} |G_m(u,z)| \chi(u).
  \end{align}
\end{lem}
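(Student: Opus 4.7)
The plan is to unfold the squared-norm in \cref{df:psi-beta} and then reorganize the resulting double sum via a substitution into the form of \cref{eq:beta-lin}. Since $z\in Z(G)$, we have $C_G(z)=G$, so the sum defining $\beta_m(\chi,z)$ ranges over all $a\in G$ with $a^m=z$. Expanding the squared modulus gives
\[
\beta_m(\chi,z) = \Bigl(\sum_{a^m=z}\chi(a)\Bigr)\overline{\Bigl(\sum_{b^m=z}\chi(b)\Bigr)} = \sum_{\substack{a,b\in G\\ a^m=z,\ b^m=z}}\chi(a)\overline{\chi(b)}.
\]
Because $\chi\in\widehat G$ is a linear character, its values lie on the unit circle and it is a group homomorphism, so $\overline{\chi(b)}=\chi(b^{-1})$ and $\chi(a)\chi(b^{-1})=\chi(ab^{-1})$. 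Hence the double sum collapses to $\sum\chi(ab^{-1})$ over the same index set.

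Next I would reparametrize by $u:=ab^{-1}$, which gives $a=ub$ and lets $b$ range independently over $G$. The double sum becomes
\[
\beta_m(\chi,z) = \sum_{u\in G}\chi(u)\,\bigl|\{b\in G : b^m=z\text{ and }(ub)^m=z\}\bigr|.
\]
To match the definition $G_m(u,z)=\{b : b^m=(bu)^m=z\}$, I need to replace $(ub)^m$ by $(bu)^m$. This is the one place where centrality of $z$ is genuinely used: writing $(ub)^m = u(bu)^{m-1}b$ and assuming $(ub)^m=z$, one solves to get $(bu)^m = bu\cdot u^{-1}zb^{-1} = bzb^{-1} = z$ since $z$ is central, and the reverse implication is symmetric. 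Thus the inner cardinality equals $|G_m(u,z)|$, and \cref{eq:beta-lin} follows.

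The only real subtlety in the argument is the conjugacy-style identity $(ub)^m=z\iff(bu)^m=z$ and its reliance on $z\in Z(G)$; everything else is just bookkeeping with the linearity of $\chi$ and the change of variables $u=ab^{-1}$. I do not expect any obstacle beyond this short centrality check.
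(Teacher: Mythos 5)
Your proof is correct and follows essentially the same route as the paper: expand the squared modulus of \cref{df:psi-beta} into the double sum $\sum_{a^m=b^m=z}\chi(a)\overline{\chi(b)}$, use linearity of $\chi$ to rewrite the summand as $\chi(ab^{-1})$, and reparametrize by $u=ab^{-1}$. The paper inserts an intermediate step of grouping pairs $(a,b)$ by the complex value $c=\chi(ab^{-1})$ before collapsing to the sum over $u$, which you skip in favor of the direct change of variables; this is just bookkeeping and changes nothing. The one place you go beyond the paper is in explicitly verifying that $\{b : b^m=(ub)^m=z\}$ coincides with $G_m(u,z)=\{b : b^m=(bu)^m=z\}$ via the centrality of $z$ --- the paper silently identifies the two. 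Your justification is sound, though it can be stated more crisply as $(bu)^m = b(ub)^m b^{-1}$, so $(ub)^m=z\iff(bu)^m=bzb^{-1}=z$.
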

\begin{proof}
  Let assumptions be as in the statement.  Expanding the definition of $\beta_m(\chi,z)$ and using multiplicativity of $\chi$ we have
  \begin{align*}
    \beta_m(\chi,z) &= \sum_{a^m=b^m=z}\chi(a)\overline{\chi(b)}\\
    &= \sum_{c\in\BC} | \{(a,b)\in G^2 \ | \ a^m=b^m=z,\ \chi(a)\chi(b\inv)=c\}|c\\
    &=\sum_{c\in\BC} | \{(a,b)\in G^2 \ | \ a^m=b^m=z,\ \chi(ab\inv)=c\}|c\\
    &=\sum_{c\in\BC} \Big(\sum_{\substack{u\in G\\\chi(u)=c}} | \{b\in G \ | b^m=(ub)^m=z|\Big)c\\
    &= \sum_{u\in G} |G_m(u,z)|\chi(u),
  \end{align*}
  as desired.
\end{proof}
Therefore the set cardinalities appearing in \cref{df:FSZ} naturally appear in these values. If a group is non-$FSZ$ at the value $z$ then the linear characters of $C_G(z)$ may be all we need to detect this with Schauenburg's criterion.  This cannot be a definitive test in full generality, however, as $C_G(z)$ may be a perfect group, or otherwise have too few linear characters (in an imprecise sense) to demonstrate the non-$FSZ$ property directly.

We next show that if a linear character suffices to demonstrate a group is non-$FSZ$ via Schauenburg's criterion, then we can obtain some information about the particular modules that would yield non-integer indicators.
\begin{thm}
  If $G$ is group with $z\in Z(G)$, $\chi\in\widehat{G}$, and $m\in\BN$ such that $\beta_m(\chi,z)\not\in\BQ$ then there exists $u\in G$ with $o(\chi(u))\not\in\{1,2,3,4,6\}$ such that $G$ is non-$FSZ_m$ over $u$.
\end{thm}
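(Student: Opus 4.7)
The plan is to argue the contrapositive: assume $G$ is $FSZ_m$ over every $u \in G$ with $o(\chi(u)) \notin \{1,2,3,4,6\}$ and deduce that $\beta_m(\chi, z) \in \BQ$, contradicting the hypothesis. The set $\{1,2,3,4,6\}$ is exactly the set of $N \in \BN$ with $\varphi(N) \leq 2$; correspondingly the roots of unity of those orders all lie in $\BQ(\zeta_{12})$ and, when paired with their complex conjugates, sum to a rational. This observation explains the peculiar list of orders appearing in the hypothesis and will drive the whole proof.

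For any fixed $n$ coprime to $|G|$, applying \cref{lem:lin-expand} gives
\[
\beta_m(\chi, z) - \beta_m(\chi, z^n) = \sum_{u \in G}\bigl(|G_m(u,z)| - |G_m(u,z^n)|\bigr)\chi(u).
\]
The working hypothesis kills every summand with $o(\chi(u)) \notin \{1,2,3,4,6\}$, leaving only $\chi(u) \in \{\pm 1, \pm i, \omega, \omega^2, \zeta_6, \zeta_6^{-1}\}$. The bijection $a \mapsto au$ shows $|G_m(u, g)| = |G_m(u^{-1}, g)|$ for every $g$, and since $\chi(u^{-1}) = \overline{\chi(u)}$, pairing $u$ with $u^{-1}$ converts the surviving sum into $\BZ$-combinations of $2\,\mathrm{Re}(\chi(u)) \in \{\pm 2, \pm 1, 0\}$, together with contributions $\chi(u) \in \{\pm 1\}$ from the fixed points $u^2 = 1$. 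In particular $\beta_m(\chi, z) - \beta_m(\chi, z^n) \in \BQ$.

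Next I would identify this difference with a Galois action. Because $\gcd(n,|G|) = 1$, the map $a \mapsto a^n$ is a bijection on $G$ (with inverse $a \mapsto a^{n'}$ for any $n'$ with $nn' \equiv 1 \pmod{\exp G}$) that restricts to a bijection $\{a : a^m = z\} \to \{a : a^m = z^n\}$. Linearity of $\chi$ gives $\chi(a^n) = \sigma_n(\chi(a))$, where $\sigma_n \in \mathrm{Gal}(\BQ(\zeta_{|G|})/\BQ)$ is the automorphism raising roots of unity to the $n$-th power; taking squared absolute values (and using that $\sigma_n$ commutes with complex conjugation) then yields $\sigma_n(\beta_m(\chi,z)) = \beta_m(\chi, z^n)$. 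Combining with the previous paragraph, $\sigma_n(\beta_m(\chi,z)) - \beta_m(\chi,z) \in \BQ$ for every $n$ coprime to $|G|$. Writing the rational increment as $r_n$ and iterating $\sigma_n$ (which fixes rationals) $d$ times, where $d$ is the order of $\sigma_n$, yields $d\cdot r_n = 0$, so $r_n = 0$; hence $\beta_m(\chi,z)$ is fixed by all of $\mathrm{Gal}(\BQ(\zeta_{|G|})/\BQ)$ and is therefore rational, the desired contradiction.

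The crux is the first step: confirming that the involution $u \leftrightarrow u^{-1}$, together with the arithmetic of $\BQ(\zeta_{12})$, forces rationality of the restricted sum. This is where the specific list $\{1,2,3,4,6\}$ is essential, as a seventh-order $\chi(u)$ would produce $2\,\mathrm{Re}(\chi(u))$ irrational and break the argument. Once the rationality step is in place, the Galois identification and the short iteration argument cocycle-style are essentially formal.
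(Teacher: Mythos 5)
Your proof is correct and takes a genuinely different route than the paper's, though both start from \cref{lem:lin-expand}.

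The paper's proof works with $\beta_m(\chi,z)$ itself: after discarding rational contributions via the real-part argument (your $\{1,2,3,4,6\}$ observation, implemented identically), it groups the remaining terms by cyclic subgroups $\cyc{u}$ of $G$, invokes the relation $|G_m(u^k,z)|=|G_m(u,z^n)|$ for $kn\equiv 1\bmod|G|$ from \citep{PS:quasitensor}, and concludes that under the $FSZ_m$ hypothesis the contribution of each cyclic subgroup reduces to $|G_m(u,z)|\sum_{(k,o(u))=1}\chi(u)^k$, a rational number. Your proof instead considers the \emph{difference} $\beta_m(\chi,z)-\beta_m(\chi,z^n)$, shows it is rational (indeed an integer) by the same rationality observation on surviving terms, and then identifies $\beta_m(\chi,z^n)=\sigma_n(\beta_m(\chi,z))$ via the bijection $a\mapsto a^n$ and multiplicativity of $\chi$; the iteration argument forcing the rational coboundary $\sigma_n(\beta)-\beta$ to vanish then yields that $\beta$ is Galois-fixed, hence rational. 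Your route is more self-contained (it avoids the external citation, replacing it with the elementary bijections $a\mapsto au$ and $a\mapsto a^n$), and it makes the Galois-theoretic content transparent, whereas the paper's grouping-by-cyclic-subgroups is the same Galois fact in disguise. One cosmetic point: the $u\leftrightarrow u^{-1}$ pairing is unnecessary, since the difference of two real numbers $\beta_m(\chi,z)-\beta_m(\chi,z^n)$ is automatically real and one can just take real parts as the paper does; but the pairing does no harm and makes the integrality explicit.
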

\begin{proof}
  Let assumptions be as in the statement. Consider the sum for $\beta_m(\chi,z)$ given by \cref{lem:lin-expand}. Since we are only interested in the irrationality of $\beta_m(\chi,z)$, those terms in the sum with $\chi(u)\in\BQ$ can clearly be ignored; in particular, those $u\in\ker(\chi)$ contribute a rational value to $\beta_m(\chi,z)$.  Moreover, since $\beta_m(\chi,z)$ is necessarily a real number, then by taking the real part of \cref{eq:beta-lin} and using the fact that fourth and sixth roots of unity have rational real parts, we see that those $u$ such that $\chi(u)$ has order in $\{1,2,3,4,6\}$ contribute a rational value to $\beta_m(\chi,z)$.

  Now let $\operatorname{cyc}(G)$ denote the set of all cyclic subgroups of $G$.  And for $X\in\operatorname{cyc}(G)$ let $\operatorname{gens}(X) = \{ x\in X \ | \ \cyc{x}=X\}$ be the set of generators of $X$.  Then
  \begin{align}
    \beta_m(\chi,z) = \sum_{X\in\operatorname{cyc}(G)}\sum_{u\in\operatorname{gens}(X)} |G_m(u,z)|\chi(u).
  \end{align}
  By \citep{PS:quasitensor} for any $n\in\BN$ with $(n,|G|)=1$ we have $|G_m(u^k,z)|=|G_m(u,z^n)|$ for $kn\equiv 1\bmod |G|$.  Now consider a fixed but otherwise arbitrary $X\in\operatorname{cyc}(G)$ and $u\in \operatorname{gens}(X)$. Then $\operatorname{gens}(X)=\{ u^k \ | \ (k,|G|)=1\}$.  It follows that if $|G_m(u,z)|=|G_m(u,z^n)|$ for all $n\in\BN$ with $(n,|G|)=1$ then for this $X,u$ we have
  \[ \sum_{v\in\operatorname{gens}(X)} |G_m(v,z)|\chi(v) = |G_m(u,z)|\sum_{\substack{k=1\\\gcd(k,o(u))=1}}^{o(u)}\chi(u)^k.\]
  This latter sum is always a rational value.  Ergo if $|G_m(u,z)|=|G_m(u,z^n)|$ for every $u\in G$ with $o(\chi(u))\not\in\{1,2,3,4,6\}$ and $(n,|G|)=1$, then $\beta_m(\chi,z)\in\BQ$, a contradiction.  This completes the proof.
\end{proof}
By using the correspondence theorem, we see that the preceding theorem equivalently says that there exists $u\in G$ such that $G$ is non-$FSZ_m$ over $u$ and that $u\not \in N$ for every normal subgroup $N\supseteq \ker(\chi)$ with $[N:\ker(\chi)]\in\{1,2,3,4,6\}$.

Since $p$-groups always admit non-trivial linear characters, these results provide a suggestive, but not definitive, procedure for testing $p$-groups for the $FSZ_{p^j}$ properties.  Namely, when checking if a $p$-group $P$ is non-$FSZ_{p^j}$ at $z\in P$, apply \cref{thm:peter} to the linear characters of $C_P(z)$ first.  The author has tested the non-$FSZ$ $p$-groups from \citep{IMM,K16:Sporadics} and has found this procedure successful for all of them.  The $p$-groups we consider in the remainder of the paper are also established as non-$FSZ$ in this fashion.

\begin{question}
  If $P$ is a $p$-group, is $P$ non-$FSZ_{p^j}$ at $z\in Z(P)$ if and only if $\beta_{p^j}(\chi,z)\not\in\BQ$ for some $\chi\in\widehat{P}$?
\end{question}

More generally, every (irreducible) character of $G$ has a kernel which is a normal subgroup of $G$.  Moreover, for any normal subgroup $N\subseteq G$, the quotient map lifts any irreducible character of $G/N$ to an irreducible character of $G$. By definition, $\beta_m(\chi,z)\not\in\BQ$ requires that $a\not\in\ker(\chi)$ for some $a\in G$ with $a^m=z$. It is therefore natural to identify the potential kernels $N$ that satisfy $a\not\in N$ for some $a\in G$ with $a^m=z$.
\begin{lem}\label{lem:kernels}
  Suppose $G$ is non-$FSZ_m$ at $z\in Z(G)$.  If $N$ is an $FSZ_m$ subgroup of $G$ then there exists $a\in G$ with $a^m=z$ and $a\not\in N$.
\end{lem}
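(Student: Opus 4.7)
The plan is to prove the contrapositive: assuming every $a\in G$ with $a^m=z$ lies in $N$, derive that $|G_m(u,z)|=|G_m(u,z^n)|$ for every $u\in G$ and every $n\in\BN$ with $(n,|G|)=1$, contradicting the hypothesis that $G$ is non-$FSZ_m$ at $z$. The idea is that every count of $m$-th roots relevant to the definition collapses into $N$, where the $FSZ_m$ property then applies.

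The first step is to bootstrap the root condition to powers of $z$: under the assumption, every $b\in G$ with $b^m=z^n$ also lies in $N$. Choosing $k\in\BN$ with $kn\equiv 1\bmod|G|$ gives $(k,|G|)=1$, hence $(k,o(b))=1$, and $(b^k)^m=z^{kn}=z$; so $b^k\in N$ by assumption, and $\langle b\rangle=\langle b^k\rangle\subseteq N$, giving $b\in N$.

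Next I would match $G_m$ with $N_m$ for the elements in question. If $G_m(u,c)\neq\emptyset$ for some $c\in\{z,z^n\}$, pick any $a$ in it; both $a$ and $au$ are $m$-th roots of $c$, and hence lie in $N$ by the previous step. This forces $u\in N$, $c=a^m\in N$, and $G_m(u,c)=N_m(u,c)$. Because $\langle z\rangle=\langle z^n\rangle$, $z\in N$ if and only if $z^n\in N$, so either $u,z,z^n$ all lie in $N$, or both $G_m(u,z)$ and $G_m(u,z^n)$ are empty.

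Finally, in the case $u,z,z^n\in N$, the $FSZ_m$ hypothesis on $N$ applies: since $|N|$ divides $|G|$ we have $(n,|N|)=1$, so $|N_m(u,z)|=|N_m(u,z^n)|$, and the previous step converts this into $|G_m(u,z)|=|G_m(u,z^n)|$, the sought contradiction. I expect the bootstrap of the first step to be the main point to argue carefully, since it is precisely where the coprimality condition $(n,|G|)=1$ in the definition of non-$FSZ_m$ is brought to bear; the remaining work is unwinding definitions and book-keeping of the empty-set cases.
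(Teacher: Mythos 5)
Your proposal is correct and takes essentially the same contrapositive route as the paper: assume every $m$-th root of $z$ lies in $N$, deduce $u,z\in N$ and $G_m(u,z^n)=N_m(u,z^n)$, and contradict non-$FSZ_m$ via the $FSZ_m$ property of $N$. You merely make explicit two steps the paper leaves implicit—the bootstrap that every $m$-th root of $z^n$ also lands in $N$ (via $b\mapsto b^k$ with $kn\equiv 1\bmod|G|$), and the bookkeeping of the empty-set cases.
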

\begin{proof}
  Let assumptions and notation be as in the statement.  By the assumption that $G$ is non-$FSZ_m$ at $z\in Z(G)$ there exists $u\in G$ such that $G_m(u,z)\neq\emptyset$.  If no such $a$ exists then $G_m(u,z)\neq\emptyset$ for some $u\in G$ implies that $u,z\in N$ and $G_m(u,z^n)=N_m(u,z^n)$ for all $n\in\BN$ with $(n,|G|)=1$.  Since $N$ is $FSZ_m$ this contradicts the assumption that $G$ is non-$FSZ_m$ at $z$.  Thus such an $a$ must exist.
\end{proof}
This says that if we suspect a group is non-$FSZ_m$ and want an irreducible representation for which \cref{thm:peter} is likely---in an imprecise sense---to establish the non-$FSZ_m$ property, we should consider the characters of $G/N$, where $N$ is an $FSZ_m$ normal subgroup of $G$.  This, and the author's prior computational experience with $\PSp_6(5)$ \citep{K16:Sporadics}, motivate our choices in \cref{sec:symplectic}.

\section{Basics of the Sylow subgroups}\label{sec:sylows}
We now review the essential facts about the (projective) symplectic groups and their Sylow subgroups in defining characteristic.  For the remainder of the paper, we let $p$ be a fixed but otherwise arbitrary odd prime.  Unless otherwise noted, $q$ will always be a power of $p$.

For any $n\in\BN$ we can define $\Sp_{2n}(q)$ as the group of isometries of a $2n$-dimensional $\BF_q$-vector space equipped with a symplectic form.  These can be described as $2n\times 2n$ matrices over $\BF_q$ decomposed into $n\times n$ blocks
\[ \begin{pmatrix}
  X& A\\ B&Y
\end{pmatrix}\]
satisfying
\begin{align}\label{eq:symp-rel}
\begin{pmatrix}
  X& A\\ B&Y
\end{pmatrix}\begin{pmatrix}
  Y^T&-A^T\\-B^T&X^T
\end{pmatrix} = I_{2n}.\end{align}
The center is of order two, generated by $-I_{2n}$.  The group \[\PSp_{2n}(q)=\Sp_{2n}(q)/Z(\Sp_{2n}(q))\] is simple when $(n,q)\neq (1,3)$ and is known as the projective symplectic group.  These two groups necessarily have isomorphic Sylow $p$-subgroups.  We also have that $\Sp_2(q)=\text{SL}_2(q)$.

We let $UT(n,q)$ denote the multiplicative group of all upper triangular $n\times n$ matrices over $\BF_q$ with all diagonal entries equal to $1$.  We say such matrices are (upper) unitriangular.
\begin{lem}\label{lem:L-order}
  Fix $n\in\BN$ and an odd prime $p$.  Let $q$ be any power of $p$, and set $t=\lceil\log_p(n)\rceil$.  Then $\exp(UT(n,q))=p^t$.
\end{lem}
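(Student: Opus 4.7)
The plan is to reduce to a straightforward nilpotency computation via the freshman's dream, together with an explicit witness realizing the alleged exponent.

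Every element of $UT(n,q)$ has the form $I+N$ where $N$ is strictly upper triangular. First I would observe that $N$ is nilpotent with $N^n=0$, and that since $\BF_q$ has characteristic $p$ and $I$ commutes with $N$, the binomial theorem collapses in characteristic $p$ to give $(I+N)^{p^k} = I + N^{p^k}$ for every $k\in\BN$ (all middle binomial coefficients $\binom{p^k}{i}$ with $0<i<p^k$ being divisible by $p$). Thus the order of $I+N$ is the smallest $p^k$ such that $N^{p^k}=0$.

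For the upper bound, I would note that $N^n=0$ implies $N^{p^k}=0$ whenever $p^k\geq n$. By definition of $t=\lceil\log_p(n)\rceil$, this holds precisely for $k\geq t$. Hence every element of $UT(n,q)$ has order dividing $p^t$, so $\exp(UT(n,q))\mid p^t$.

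For the matching lower bound, I would exhibit a single matrix achieving order $p^t$: take the standard unitriangular Jordan block $J=I+E$, where $E$ has $1$'s on the first superdiagonal and $0$'s elsewhere. A direct computation shows $E^k$ has $1$'s on the $k$-th superdiagonal for $k<n$, so $E^{n-1}\neq 0$. Since $t=\lceil\log_p(n)\rceil$ means $p^{t-1}<n$, we get $E^{p^{t-1}}\neq 0$, and therefore $J^{p^{t-1}} = I+E^{p^{t-1}}\neq I$. So $J$ has order at least $p^t$, which combined with the previous paragraph forces $\exp(UT(n,q))=p^t$.

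There isn't really a serious obstacle; the only point that deserves care is the characteristic-$p$ identity $(I+N)^{p^k}=I+N^{p^k}$, which requires that $I$ and $N$ commute (automatic) and that we are working over a ring of characteristic $p$ (which is true since $\BF_q$ has characteristic $p$). Note that the proof nowhere uses that $p$ is odd; the odd-prime hypothesis is inherited from the global conventions of the section but is not actually needed for this lemma.
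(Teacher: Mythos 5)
Your proof is correct and is essentially the paper's argument with the details filled in: both hinge on the single Jordan block $I+E$ realizing order $p^t$, and your computation $(I+N)^{p^k}=I+N^{p^k}$ is exactly the ``easy to see'' step the paper leaves implicit. Your upper bound via $N^n=0$ (so $N^{p^k}=0$ for $p^k\geq n$) is a touch more self-contained than the paper's appeal to Jordan canonical form to reduce to the single-block case, and you are also right that the oddness of $p$ plays no role in this lemma.
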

\begin{proof}
  Any upper triangular $n\times n$ matrix over $\BF_q$ whose diagonal entries are all one is invertible, and is an element of $UT(n,q)$.  Since there are $n(n-1)/2$ entries above the diagonal, $|UT(n,q)| = q^{n(n-1)/2}$.  Therefore $UT(n,q)$ is a $p$-group, and so has exponent a power of $p$.  Moreover, there is a well-defined canonical Jordan form for a unitriangular matrix, which is again a unitriangular matrix.  It is then easy to see that the maximum possible order for an element of $UT(n,q)$ comes from an element with a single Jordan block, and such an element has order precisely $p^t$, with $t$ defined as in the statement.
\end{proof}

Next we consider the set of all matrices in $\text{GL}_{2n}(q)$ with the upper triangular block decomposition
\begin{align}\label{eq:blocks}
\begin{pmatrix}L^T&A\\0&L\inv\end{pmatrix}
\end{align}
with $L\in UT(n,q)$ being unitriangular and $A\in M_n(\BF_q)$ being any matrix such that $AL$ is symmetric: $(AL)^T=AL$. These matrices are well-known to give a Sylow $p$-subgroup of $\Sp_{2n}(q)$.  For the remainder of the paper we denote the above Sylow $p$-subgroup by $P$, where $n$ and $q$ should be clear from the context, or otherwise arbitrary.

\begin{prop}\label{prop:abelianization}
  Let $\BF_q^+$ denote the additive group of $\BF_q$.  Then the map $\kappa\colon P\to (\BF_q^+)^n$ given by
  \[ \kappa(X) = (A_{1,1},L_{1,2},L_{2,3},...,L_{n-1,n}),\]
  where $X\in P$ has the block decomposition in terms of $A,L$ given in \cref{eq:blocks}, is a surjective group homomorphism.
\end{prop}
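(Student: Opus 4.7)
The plan is to verify the homomorphism property by an explicit block matrix multiplication, and to prove surjectivity by writing down a preimage in closed form.

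First I would multiply two elements $X_i = \begin{pmatrix} L_i^T & A_i \\ 0 & L_i^{-1}\end{pmatrix} \in P$ for $i=1,2$. The block structure gives
\[ X_1 X_2 = \begin{pmatrix}(L_2 L_1)^T & L_1^T A_2 + A_1 L_2^{-1} \\ 0 & (L_2 L_1)^{-1}\end{pmatrix},\]
so the new $L$-block is $L_3 := L_2 L_1$ and the new $A$-block is $A_3 := L_1^T A_2 + A_1 L_2^{-1}$. To show $\kappa(X_1 X_2) = \kappa(X_1) + \kappa(X_2)$ I would then check two things coordinate-by-coordinate. For $1 \leq i \leq n-1$, the superdiagonal entry $(L_3)_{i,i+1} = (L_2 L_1)_{i,i+1}$: since both factors are unitriangular, the only nonzero contributions in the product sum come from the index $k=i$ (giving $(L_1)_{i,i+1}$) and $k=i+1$ (giving $(L_2)_{i,i+1}$), so superdiagonals simply add. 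For the $(1,1)$-entry of $A_3$, triangularity of $L_1^T$ and $L_2^{-1}$ forces the only surviving terms in $(L_1^T A_2)_{1,1}$ and $(A_1 L_2^{-1})_{1,1}$ to be those where the diagonal $1$'s multiply $(A_2)_{1,1}$ and $(A_1)_{1,1}$ respectively; thus $(A_3)_{1,1} = (A_1)_{1,1} + (A_2)_{1,1}$.

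For surjectivity, given any tuple $(a, b_1, \dots, b_{n-1}) \in (\BF_q^+)^n$, I would construct an explicit preimage. Take $L$ to be the unitriangular matrix with $L_{i,i+1} = b_i$ and all other off-diagonal entries zero, and let $S$ be the symmetric matrix $a E_{1,1}$ (that is, $a$ in position $(1,1)$ and zero elsewhere). Set $A := S L^{-1}$. Then $AL = S$ is symmetric, so $X := \begin{pmatrix}L^T & A \\ 0 & L^{-1}\end{pmatrix} \in P$. Since $L^{-1}$ is still unitriangular, $(A)_{1,1} = (S L^{-1})_{1,1} = S_{1,1} (L^{-1})_{1,1} = a$, and by construction the superdiagonal of $L$ reproduces $(b_1, \dots, b_{n-1})$. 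Hence $\kappa(X)$ equals the prescribed tuple.

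There is no real obstacle here: both parts reduce to the observation that for unitriangular matrices the diagonal entries are $1$, which pins down exactly the terms of the products that contribute to the first superdiagonal and to the $(1,1)$-entry. The only mild bookkeeping is keeping track that $L^{-1}$ and $L^T$ are also (respectively upper/lower) triangular with unit diagonal, so they do not interfere with the coordinates $\kappa$ reads off.
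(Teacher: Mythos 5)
Your proof is correct and follows essentially the same route as the paper's: multiply the block matrices, then observe that unitriangularity pins down the surviving terms in the first superdiagonal and in the $(1,1)$-entry so that both coordinates add. The only difference is that you spell out surjectivity with an explicit preimage $A = aE_{1,1}L^{-1}$, which the paper dismisses as immediate; that extra detail is harmless and correct.
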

\begin{proof}
  That $\kappa$ is surjective is immediate.  So let $X,Y\in P$ have block decompositions
  \begin{align*}
    X &= \begin{pmatrix}
    L^T & A\\ 0 & L\inv
    \end{pmatrix},\\
    Y &=\begin{pmatrix}
      M^T & B\\ 0 & M\inv
    \end{pmatrix}.
  \end{align*}
  Then
  \begin{align*}
    XY = \begin{pmatrix}
      L^T M^T & L^T B +A M\inv\\
      0 & L\inv M\inv
    \end{pmatrix}.
  \end{align*}
  Since $M,L$ are unitriangular it readily follows that $(ML)_{i,i+1} = \sum_k M_{i,k} L_{k,i+1} = M_{i,i+1}+L_{i,i+1}$ for all $1\leq i < n$.  Moreover, again using that $M,L$ are unitriangular we have that
  \[ (L^T B + A M\inv)_{1,1} = (L^T B)_{1,1}+(A M\inv)_{1,1} = B_{1,1}+A_{1,1}.\]
  Thus $\kappa$ is a group homomorphism, as desired, and this completes the proof.
\end{proof}

\begin{cor}\label{cor:P-character}
  Given any non-trivial character $\lambda\colon \BF_q^+\to \BC^\times$, the map $\xi_\lambda\colon P \to \BC^\times$ given by $\xi_\lambda(X) = \lambda(A_{1,1})$, where $X$ has the block decomposition given in \cref{eq:blocks}, is a non-trivial linear character of $P$.
\end{cor}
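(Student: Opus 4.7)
The plan is to realize $\xi_\lambda$ as the composition of three maps that are already known (or easily seen) to be group homomorphisms, and then exhibit an explicit element of $P$ on which $\xi_\lambda$ is nontrivial. Since $\lambda$ is a homomorphism $\BF_q^+\to\BC^\times$, the only real content is the assertion that $X\mapsto A_{1,1}$ is a homomorphism $P\to\BF_q^+$, which is already packaged in \cref{prop:abelianization}.

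Concretely, first I would let $\pi_1\colon (\BF_q^+)^n\to\BF_q^+$ denote projection onto the first coordinate, which is manifestly a group homomorphism. By \cref{prop:abelianization}, $\kappa\colon P\to (\BF_q^+)^n$ is a (surjective) group homomorphism with $\pi_1\circ\kappa(X)=A_{1,1}$ when $X$ has block form as in \cref{eq:blocks}. Therefore
\[
\xi_\lambda \;=\; \lambda\circ\pi_1\circ\kappa
\]
is a composition of group homomorphisms $P\to\BC^\times$, hence a linear character of $P$.

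For nontriviality, since $\lambda$ is nontrivial there exists $c\in\BF_q$ with $\lambda(c)\neq 1$. Take $L=I_n$ and let $A$ be the symmetric $n\times n$ matrix with $A_{1,1}=c$ and all other entries zero; then $AL=A$ is symmetric, so the corresponding block matrix $X$ lies in $P$, and $\xi_\lambda(X)=\lambda(c)\neq 1$. I do not anticipate any genuine obstacle here: the corollary is essentially a reformulation of \cref{prop:abelianization} after composing with a character of the first $\BF_q^+$ factor of the abelianization quotient.
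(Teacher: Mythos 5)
Your proof is correct and follows essentially the same approach as the paper, which simply notes the corollary is an immediate consequence of \cref{prop:abelianization}; you have just spelled out the composition $\lambda\circ\pi_1\circ\kappa$ and the nontriviality check explicitly.
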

\begin{proof}
  This is an immediate consequence of \cref{prop:abelianization}.
\end{proof}

We denote the elementary matrices by $E_{i,j}$, which is the matrix with a $1$ in position $(i,j)$ and zeroes everywhere else.  The dimension of $E_{i,j}$ is not particularly important, but we will implicitly assume that $E_{i,j}$ is a square matrix of suitable dimensions---usually an $n\times n$ or $2n\times 2n$ matrix when talking about $\Sp_{2n}(q)$---wherever it appears.

We will have particular need for computing $p$-th powers of arbitrary matrices in $P$.  To this end, we first note the following.
\begin{lem}\label{lem:p-roots}
  Let
  \[ M=\begin{pmatrix} L^T& A\\0&L\inv\end{pmatrix} \in P.\]
  Then for any $j\in\BN$
  \begin{align}\label{eq:powers} M^j = \begin{pmatrix} (L^j)^T& \Big(\sum_{m=0}^{j-1} (L^m)^T A L^m\Big)L^{1-j}\\
  0&L^{-j}\end{pmatrix}.\end{align}
  As special cases we have the following.
  \begin{enumerate}
    \item If $L$ has order $p^k$ then
  \[ M^{p^k} = \begin{pmatrix} I_n& \Big(\sum_{m=0}^{p^{k}-1} (L^m)^T A L^m\Big)L\\
  0&I_n\end{pmatrix}.\]
    \item If $L$ has order $p^k$ then $M$ has order either $p^k$ or $p^{k+1}$.
  \end{enumerate}
\end{lem}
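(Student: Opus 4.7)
The plan is to prove the general power formula \eqref{eq:powers} by induction on $j$, and then obtain the two special cases as easy consequences.

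For the induction, the base case $j=1$ is a direct check: $(L^1)^T = L^T$, the single term $m=0$ in the sum is $A$, and $L^{1-1}=I$, reproducing $M$. For the inductive step, I would compute
\[
M^{j+1} = M^j \cdot M = \begin{pmatrix}(L^j)^T & S_j L^{1-j}\\ 0 & L^{-j}\end{pmatrix}\begin{pmatrix} L^T & A\\ 0 & L\inv\end{pmatrix}
\]
where $S_j = \sum_{m=0}^{j-1}(L^m)^T A L^m$. The diagonal blocks are immediate: $(L^j)^T L^T = (L^{j+1})^T$ and $L^{-j}L\inv = L^{-(j+1)}$. The off-diagonal block is $(L^j)^T A + S_j L^{-j}$, and I would show this equals $S_{j+1}L^{-j}$ by using $S_{j+1} = S_j + (L^j)^T A L^j$, which on multiplying by $L^{-j}$ on the right yields exactly the required identity.

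For part (1), I simply substitute $j=p^k$ into \eqref{eq:powers}: since $L$ has order $p^k$, both $L^{p^k}$ and $L^{-p^k}$ equal $I_n$, and $L^{1-p^k} = L$, which directly produces the stated formula.

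For part (2), the analysis splits on whether the off-diagonal block $T:=\big(\sum_{m=0}^{p^k-1}(L^m)^T A L^m\big)L$ of $M^{p^k}$ is zero. If $T=0$ then $M^{p^k}=I_{2n}$, and since the bottom-right block of $M^j$ is $L^{-j}$, no smaller power of $M$ can be the identity (as $L$ has order exactly $p^k$), so $M$ has order $p^k$. If $T \neq 0$, then $M^{p^k}$ is a nontrivial matrix of the shape $\bigl(\begin{smallmatrix} I & T\\ 0 & I\end{smallmatrix}\bigr)$. A direct computation shows $\bigl(\begin{smallmatrix} I & T\\ 0 & I\end{smallmatrix}\bigr)^r = \bigl(\begin{smallmatrix} I & rT\\ 0 & I\end{smallmatrix}\bigr)$, which is the identity iff $p \mid r$ (since $\BF_q$ has characteristic $p$ and $T \neq 0$). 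Hence $M^{p^k}$ has order exactly $p$, and $M$ therefore has order $p^{k+1}$.

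The main subtlety is making sure the off-diagonal block computation in the induction step handles the $L^m$ factors correctly; everything else is formal manipulation. The order dichotomy in (2) is then a clean consequence once the shape of $M^{p^k}$ from (1) is in hand.
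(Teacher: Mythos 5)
Your proof is correct and follows essentially the same route as the paper, which dispenses with this lemma by remarking that the formula for $M^j$ is an easy induction and the special cases are immediate consequences. Your write-up simply fills in that induction (correctly handling the off-diagonal block via $S_{j+1} = S_j + (L^j)^T A L^j$) and spells out the order dichotomy in part (2), both of which are accurate.
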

\begin{proof}
  The desired formula for $M^j$ is an easy induction, and the special cases are immediate consequences.
\end{proof}

The summation appearing in \cref{eq:powers} will appear several times in the remainder of the paper when $j$ is a power of $p$.  So for ease of notation we have the following.
\begin{df}\label{df:YL}
For a given unitriangular matrix $L\in UT(n,q)$ and $k\in\BN$, we define a map
\begin{gather*}
Y_{L,k}\colon M_n(\BF_q)\to M_n(\BF_q)\\
A\mapsto \sum_{m=0}^{p^k-1} (L^m)^T A L^m.
\end{gather*}
\end{df}
This is clearly an $\BF_q$-linear map and so is completely determined by its values on the elementary matrices.  Note that $Y_{L,k}$ is the zero map whenever the order of $L$ is less than $p^k$.

Now for any upper triangular matrix $L\in UT(n,q)$ we can write $L= \sum_{i\leq j} l_{i,j}E_{i,j}$ for some scalars $l_{i,j}\in\BF_q$.  Then for any $m,i,j\in\BN$ with $i\leq j\leq n$ we have that $L^m$ is also upper-triangular with entries
\begin{align}\label{eq:L-powers}
  (L^m)_{i,j} = \sum_{i= i_0\leq i_1\leq\cdots \leq i_m= j}\prod_{a=1}^{m} l_{i_{a-1},i_a}.
\end{align}
Note that the sum here is over all non-decreasing sequences of positive integers of length $m+1$ that start at $i$ and end at $j$.  

\begin{df}\label{df:upsilon}
  Given a unitriangular matrix $L\in \GL{n}{\BF_q}$ with $2n\geq p+1$ and $k\in\BN$ such that $p^k\leq 2n-1$ we define the scalar
  \[ \Upsilon(L,k) = \prod_{i=1}^{(p^k-1)/2} l_{i,i+1}^2,\]
  which is the product of the squares of the first $(p^k-1)/2$ entries immediately above the diagonal.
\end{df}
Note that we always have $\Upsilon(L,k)\in QR(q)$. Our goal is to connect $Y(L,k)$, $\Upsilon(L,k)$, and \cref{lem:p-roots}.  To this end, we recall a few results on congruences modulo a prime.

\begin{lem}\label{lem:poly-vanish}
  Let $p$ be a prime. For any $k\in\BZ$
  \begin{align*}
    \sum_{i=1}^{p-1} i^k \equiv \begin{cases}
      0\bmod p,& p-1\nmid k\\
      -1\bmod p,&p-1\mid k.
    \end{cases}
  \end{align*}
\end{lem}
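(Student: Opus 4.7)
The plan is to split on whether $p-1$ divides $k$ and to treat both cases by invoking the structure of $\BF_p^\times$ as a cyclic group of order $p-1$. Since $i$ ranges over $1,\dots,p-1$, each $i$ is invertible modulo $p$, so $i^k$ is well-defined modulo $p$ even when $k$ is negative.

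In the case $p-1 \mid k$, write $k=(p-1)\ell$. By Fermat's little theorem $i^{p-1}\equiv 1\bmod p$ for each $1\le i\le p-1$, so $i^k\equiv 1\bmod p$ and the sum reduces to $(p-1)\cdot 1\equiv -1\bmod p$, as required.

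In the case $p-1\nmid k$, I would pick a primitive root $g$ modulo $p$ (a generator of $\BF_p^\times$). Multiplication by $g$ permutes $\{1,2,\dots,p-1\}$ modulo $p$, so
\[ S := \sum_{i=1}^{p-1} i^k \equiv \sum_{i=1}^{p-1}(gi)^k = g^k \sum_{i=1}^{p-1} i^k = g^k S \pmod p,\]
whence $(g^k-1)S\equiv 0\bmod p$. Since $g$ has multiplicative order exactly $p-1$ and $p-1\nmid k$, we have $g^k\not\equiv 1\bmod p$, so $g^k-1$ is a unit in $\BF_p$ and $S\equiv 0\bmod p$, as desired.

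There is no real obstacle here: the proof is a textbook application of Fermat's little theorem and the existence of a primitive root. The only point deserving a brief comment is the convention for negative $k$, which is unambiguous because every $i\in\{1,\dots,p-1\}$ lies in $\BF_p^\times$.
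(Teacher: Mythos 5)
Your proof is correct and follows essentially the same approach as the paper's: handle the $p-1\mid k$ case by Fermat's little theorem, and in the $p-1\nmid k$ case multiply the sum by a suitable unit $a$ and conclude $(a^k-1)S\equiv 0$. The only cosmetic difference is that you explicitly take $a$ to be a primitive root, whereas the paper merely asserts the existence of some $a$ with $a^k\not\equiv 1\bmod p$ when $p-1\nmid k$; both justifications ultimately rest on $\BF_p^\times$ being cyclic.
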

\begin{proof}
  By Fermat's little theorem, we have that $a^{k}\equiv 1\bmod p$ for all $a\not\equiv 0\bmod p$ if and only if $p-1$ divides $k$.  This gives the $p-1\mid k$ case. So we may suppose that there exists $a\not\equiv0\bmod p$ with $a^k\not\equiv 1\bmod p$.  Since left multiplication by $a$ is a bijection on $\BZ_p^\times$, we have
  \[ \sum_{i=1}^{p-1} i^k \equiv \sum_{i=1}^{p-1} (ai)^k = a^k \sum_{i=1}^{p-1} i^k\bmod p,\]
  and thus the summation must be divisible by $p$, as desired.
\end{proof}

For non-negative integers $a,b$ we define the binomial coefficients \[\binom{a}{b} = \frac{a!}{b!(a-b)!}.\] We adopt the conventions that $0!=1$ and $\binom{a}{b}=0$ whenever $b>a$.
\begin{lem}\label{lem:binom-vanish}
  Let $p$ be an odd prime, $j\in\BN$, and $k,l\in\BZ$ be such that $0\leq k,l\leq (p^j-1)/2$.  If $k+l<p^j-1$ then
  \[ \sum_{m=0}^{p^j-1}\binom{m}{k}\binom{m}{l} \equiv 0 \bmod p.\]
  Else, when $k=l=(p^j-1)/2$ we have
  \[ \sum_{m=0}^{p^j-1} \binom{m}{(p^j-1)/2}^2 \equiv (-1)^{j(p-1)/2}\bmod p.\]
\end{lem}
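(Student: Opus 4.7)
The plan is to reduce the mod-$p$ computation to a single-digit calculation via Lucas' theorem, and then evaluate the resulting one-digit sum using the hockey stick identity.

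First, I would write $m, k, l$ in base $p$ as $m = \sum_{i=0}^{j-1} m_i p^i$, $k = \sum_i k_i p^i$, $l = \sum_i l_i p^i$ with digits in $\{0, \ldots, p-1\}$. Lucas' theorem gives $\binom{m}{k}\binom{m}{l} \equiv \prod_{i=0}^{j-1} \binom{m_i}{k_i}\binom{m_i}{l_i} \bmod p$, and since $(m_0, \ldots, m_{j-1})$ ranges bijectively over $\{0, \ldots, p-1\}^j$ as $m$ ranges over $\{0, \ldots, p^j - 1\}$, the sum factors as
\[
\sum_{m=0}^{p^j - 1} \binom{m}{k}\binom{m}{l} \equiv \prod_{i=0}^{j-1} S(k_i, l_i) \bmod p, \qquad S(a, b) := \sum_{m=0}^{p-1}\binom{m}{a}\binom{m}{b}.
\]

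Next I would evaluate $S(a,b)$ for $0 \leq a,b \leq p-1$ via the Vandermonde--Chu expansion
\[
\binom{m}{a}\binom{m}{b} = \sum_{r=0}^{\min(a,b)} \frac{(a+b-r)!}{(a-r)!(b-r)!\,r!}\binom{m}{a+b-r}
\]
(verified combinatorially by partitioning pairs of subsets of $\{1, \ldots, m\}$ according to the size $r$ of their intersection), together with the hockey stick identity $\sum_{m=0}^{p-1} \binom{m}{c} = \binom{p}{c+1}$. Since $\binom{p}{c+1} \equiv 0 \bmod p$ for $1 \leq c+1 \leq p-1$ and $\binom{p}{p} = 1$, only the term with $a+b-r = p-1$ survives modulo $p$; this requires $r = a+b-(p-1) \in [0, \min(a,b)]$, which holds iff $a+b \geq p-1$ (using $a,b \leq p-1$). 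Hence $S(a,b) \equiv 0 \bmod p$ precisely when $a+b < p - 1$, and is a $p$-adic unit otherwise.

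To conclude, observe that as integers $k + l = \sum_i (k_i + l_i) p^i$, so if every digit pair satisfied $k_i + l_i \geq p - 1$ we would have $k + l \geq \sum_i (p-1) p^i = p^j - 1$. Under the hypothesis $k + l < p^j - 1$ some digit pair must therefore have $k_i + l_i < p - 1$, forcing $S(k_i, l_i) \equiv 0 \bmod p$ and annihilating the product. For the second assertion, the expansion $(p^j - 1)/2 = \sum_{i=0}^{j-1} \frac{p-1}{2} p^i$ forces every digit of $k = l = (p^j - 1)/2$ to equal $(p-1)/2$, so the product reduces to $\binom{p-1}{(p-1)/2}^j$; since $\binom{p-1}{c} = \prod_{i=1}^{c}(p-i)/i \equiv (-1)^c \bmod p$, this equals $(-1)^{j(p-1)/2}$, as claimed. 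The only piece requiring any real care is the Vandermonde--Chu identity for $\binom{m}{a}\binom{m}{b}$, but that is routine, so no substantive obstacle is expected.
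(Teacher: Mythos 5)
Your proof is correct, and it takes a genuinely different route from the paper's. The paper proves the lemma by induction on $j$: the base case $j=1$ is handled by writing $\binom{m}{k}\binom{m}{l}$ as a polynomial in $m$ of degree $k+l$ with zero constant term and invoking the power-sum fact $\sum_{m=1}^{p-1}m^i\equiv 0$ or $-1\bmod p$ according as $p-1\nmid i$ or $p-1\mid i$ (the paper's \cref{lem:poly-vanish}), and the inductive step peels off the leading base-$p$ digit using Lucas' theorem. You instead apply Lucas all at once to factor the sum completely as $\prod_i S(k_i,l_i)$, then evaluate the single-digit sum $S(a,b)$ in closed form via the Vandermonde--Chu expansion of $\binom{m}{a}\binom{m}{b}$ together with the hockey-stick identity $\sum_{m=0}^{p-1}\binom{m}{c}=\binom{p}{c+1}$, finding $S(a,b)\equiv 0$ iff $a+b<p-1$. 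This avoids induction entirely and, as a bonus, shows that the first conclusion holds for all $k,l\in\{0,\dots,p^j-1\}$ with $k+l<p^j-1$, not just those bounded by $(p^j-1)/2$; the paper's inductive argument genuinely uses the bound $k,l\leq (p^j-1)/2$ to propagate the hypotheses on the digits. The tradeoff is that your route requires the Vandermonde--Chu identity, which the paper avoids in favor of the elementary power-sum lemma. Both are sound; yours is somewhat more conceptual, the paper's is more self-contained.
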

\begin{proof}
  Let assumptions and notation be as in the statement. We proceed by induction on $j$.

  For $j=1$, let $k,l\in\BZ$ with $0\leq k,l<p-1$.  The result is trivial if $k=l=0$. So assuming $k+l>0$ we have
  \[ \sum_{m=0}^{p-1} \binom{m}{k}\binom{m}{l} = \frac{1}{k!l!} \sum_{m=1}^{p-1} \prod_{a=0}^{k-1}(m-a)\prod_{b=0}^{l-1}(m-b).\]
  Note that $k!$ and $l!$ are units modulo $p$, so this expression also makes sense modulo $p$. The product inside the summation is a polynomial in $m$ of degree $k+l$ with a constant term of $0$.  When $k+l<p-1$, then by \cref{lem:poly-vanish} the summation vanishes modulo $p$, as desired. On the other hand, when $k+l=p-1$, then also by \cref{lem:poly-vanish} the summation is equal to $-1/(k!l!)$.  A consequence of Wilson's lemma is that $(((p-1)/2)!)^2\equiv (-1)^{(p+1)/2}\bmod p$, which gives the $k=l=(p-1)/2$ case. A simple induction then permits the evaluation of $k!l!$ for any $k,l$ with $k+l=p-1$, but since the statement of the theorem requires us to only consider the cases $0\leq k,l\leq (p-1)/2$, we have completed the case $j=1$.

  Now let $1<j\in\BN$ and suppose the result holds for all smaller values of $j$. Let $k,l\in\BZ$ be such that $0\leq k,l\leq (p^{j}-1)/2$.  The result is again trivial when $k=l=0$, so we may suppose that $k+l>0$.  Expanding in base $p$, by assumptions we may write
  \begin{gather*}
    \frac{p^j-1}{2} = \sum_{y=0}^{j-1} \frac{p-1}{2} p^y\\
    k = \alpha p^{j-1} + k'\\
    l = \beta p^{j-1} + l'
  \end{gather*}
  for some integers $0\leq k',l'<p^{j-1}$  and $0\leq\alpha,\beta\leq (p-1)/2$.  Then by also expanding $m=n p^{j-1}+m'$ in base $p$ and applying Lucas's theorem (see \cite[Theorem 1]{Fine:Binomial}) we have
  \begin{align}\label{eq:lucas}
    \sum_{m=0}^{p^{j}-1} \binom{m}{k}\binom{m}{l} \equiv \sum_{m'=0}^{p^{j-1}-1}\binom{m'}{k'}\binom{m'}{l'}\Big(\sum_{n=0}^{p-1} \binom{n}{\alpha}\binom{n}{\beta}\Big) \bmod p.
  \end{align}
  By the base case the inner summation vanishes whenever $0\leq \alpha,\beta\leq (p-1)/2$ and $\alpha+\beta<p-1$.  In the remaining case of $\alpha=\beta=(p-1)/2$, then by the base case again the inner summation is congruent to $(-1)^{(p-1)/2}$.  In this case the upper bound on $k,l$ then forces $k',l'\leq (p^{j-1}-1)/2$, so we may then apply the inductive hypothesis to the remaining summation.

  This completes the proof.
\end{proof}

We can now connect $Y_{L,k}$ and $\Upsilon(L,k)$.
\begin{thm}\label{thm:upsilon}
  Fix an odd prime $p$ and $n\in\BN$ such that $r=\lceil \log_p(2n)\rceil\geq 2$, and let $L\in UT(n,q)$ be unitriangular. Then for any $a,b,s,t,y\in \BN$ with $y<r$; $s,t\leq n$; and $a,b\leq (p^y+1)/2$ we have
  \begin{align}\label{eq:upsilon-1}
    Y_{L,y}(E_{s,t})_{a,b} = (-1)^{y(p-1)/2}\delta_{a,b,(p^y+1)/2}\delta_{s,t,1}\Upsilon(L,y).
  \end{align}
\end{thm}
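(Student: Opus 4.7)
The plan is to linearize $L$ by writing $L = I + N$ with $N = L - I$ strictly upper triangular of entries $N_{i,j} = l_{i,j}$ for $i<j$. Since $I$ and $N$ commute, the binomial theorem gives $L^m = \sum_{k\geq 0}\binom{m}{k} N^k$. A direct computation gives
\[ ((L^m)^T E_{s,t} L^m)_{a,b} = (L^m)_{s,a}(L^m)_{t,b},\]
so after interchanging the order of summation
\begin{align*}
Y_{L,y}(E_{s,t})_{a,b} = \sum_{k,l\geq 0} (N^k)_{s,a}(N^l)_{t,b}\Big(\sum_{m=0}^{p^y-1}\binom{m}{k}\binom{m}{l}\Big).
\end{align*}
The inner sum will be handled by \cref{lem:binom-vanish}, working in $\BF_q$, which has characteristic $p$.

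The crucial bookkeeping is that $N$ is strictly upper triangular, so $(N^k)_{s,a}=0$ whenever $k>a-s$. The hypotheses $a\leq(p^y+1)/2$ and $s\geq 1$ give $a-s\leq(p^y-1)/2$, and analogously $b-t\leq(p^y-1)/2$. Therefore every nonzero summand has $0\leq k,l\leq(p^y-1)/2$, which is exactly the regime of \cref{lem:binom-vanish}. That lemma forces the binomial double-sum to vanish in $\BF_q$ except when $k=l=(p^y-1)/2$, in which case it equals $(-1)^{y(p-1)/2}$.

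For this lone surviving term to be nonzero we need $(N^{(p^y-1)/2})_{s,a}\neq 0$, hence $a-s\geq(p^y-1)/2$; combined with the upper bounds this pins down $s=1$ and $a=(p^y+1)/2$, and analogously $t=1$ and $b=(p^y+1)/2$. To evaluate the surviving factor, recall that $(N^k)_{i,j}$ equals the sum over strictly increasing sequences $i=i_0<i_1<\cdots<i_k=j$ of $\prod_{a=1}^{k} l_{i_{a-1},i_a}$. With $k=(p^y-1)/2$, $i=1$, $j=(p^y+1)/2$, the total gap equals the number of steps, so every step is $+1$, giving a unique sequence and
\[ (N^{(p^y-1)/2})_{1,(p^y+1)/2} = \prod_{i=1}^{(p^y-1)/2} l_{i,i+1}.\]
Squaring produces $\Upsilon(L,y)$, yielding the claimed formula.

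The main obstacle, and indeed the only substantive content beyond applying the two cited lemmas, is verifying that the nilpotency bounds on $(N^k)_{s,a}$ mesh precisely with the hypotheses on $a,b,s,t$ so that \cref{lem:binom-vanish} applies in the stated form and so that only one term survives. Everything else (binomial expansion, identification of the single path in $N^{(p^y-1)/2}$) is elementary.
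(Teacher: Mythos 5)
Your proof is correct and reaches the same conclusion, but via a noticeably cleaner route than the paper's. The paper expands $(L^m)_{i,j}$ as a sum over non-decreasing index sequences (\cref{eq:L-powers}), treats the super-diagonal entries $l_{i,j}$ as indeterminates, groups terms by strictly increasing subsequences, and then counts the non-decreasing sequences refining a given one via compositions to recover the binomial coefficients $\binom{m}{k_1-1}\binom{m}{k_2-1}$. You instead write $L=I+N$ with $N$ strictly upper triangular and nilpotent and invoke the binomial theorem $L^m=\sum_k\binom{m}{k}N^k$ directly; the interchange of summation then produces the inner sum $\sum_{m=0}^{p^y-1}\binom{m}{k}\binom{m}{l}$ immediately, with $(N^k)_{s,a}$ absorbing all the sequence bookkeeping. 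Both arguments then hand the inner sum to \cref{lem:binom-vanish}, so the number-theoretic heart is identical; what you gain is that the nilpotency bound $(N^k)_{s,a}=0$ for $k>a-s$ replaces the paper's bookkeeping with indeterminates and compositions, making the constraint $0\le k,l\le(p^y-1)/2$ and the pinning down of $(s,a,t,b)=(1,(p^y+1)/2,1,(p^y+1)/2)$ transparent. A small observation worth making explicit in your write-up: within the admissible range $k,l\le(p^y-1)/2$, the boundary $k+l=p^y-1$ forces $k=l=(p^y-1)/2$, so \cref{lem:binom-vanish} really does cover all remaining cases, with no gap between ``$k+l<p^y-1$'' and ``$k=l=(p^y-1)/2$''. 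The evaluation of the lone surviving coefficient $(N^{(p^y-1)/2})_{1,(p^y+1)/2}$ as the unique length-maximal path along the super-diagonal, and hence the appearance of $\Upsilon(L,y)$ after squaring, is handled correctly.
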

\begin{proof}
  We fix $L,y,a,b,r,s,t$ as in the statement and let $Y$ stand for $Y_{L,y}$.

  We observe that from \cref{eq:L-powers} it follows for any $1\leq i,j\leq n$ that
  \begin{align}\label{eq:y-st-entries}
    Y(E_{s,t})_{i,j} &= \sum_{m=1}^{p^y}\sum \sum \prod_{c,d=1}^m l_{u_{c-1},u_c}l_{v_{d-1},v_d},
  \end{align}
  where the unlabeled double summation is over all pairs of non-decreasing sequences $\{u_c\}$ and $\{v_d\}$ of length $m+1$ satisfying
  \begin{align*}
    u_0 &=t,\qquad u_m =j;\\
    v_0 &=s,\qquad v_m =i.
  \end{align*}
  Note that if $t>j$ or $s>i$ then $Y(E_{s,t})_{i,j}=0$ as desired.  So we may suppose that $s\leq i$ and $t\leq j$.

  We consider the super-diagonal elements $l_{i,j}$ of $L$ as indeterminates for the remainder of the proof.

  Each distinct product of indeterminates appearing in \cref{eq:y-st-entries} can be specified by a pair of strictly increasing sequences, where a constant length one sequence is trivially strictly increasing: one beginning at $t$ and ending at $j$, and the other beginning at $s$ and ending at $i$. Note that by assumptions on $a,b$ the maximum length of any such sequence appearing in $Y(E_{s,t})_{a,b}$ is $(p^y+1)/2$, and strictly increasing sequences with this maximal length are uniquely determined.  To compute the coefficient on such a product of indeterminates we need to then count, for each $1\leq m\leq p^y$, the number of non-decreasing sequences containing each such choice as its maximal strictly increasing subsequence.  Note that there will, in general, be multiple such choices of pairs of strictly increasing or constant sequences that provide the given product.  You can often just switch the order they are selected in, namely.  However, to show that the necessary coefficients vanish modulo $p$ it suffices to show that the contribution from each such pair of strictly increasing sequences, not both of maximum possible length, vanishes modulo $p$.

  For any given strictly increasing sequence of length $1\leq k_1\leq (p^y+1)/2$, the number of non-decreasing sequences of length $m+1$ containing it as their maximal strictly increasing subsequence is precisely the number of compositions (see \citep[Chapter 1, Section 1]{HM:Combinatorics}) of $m+1$ of length $k_1$. Here, each term in the composition tells us how many times the corresponding entry in the given maximal subsequence is repeated.  Therefore, for a given pair of strictly increasing sequences of lengths $k_1$ and $k_2$ respectively, by \citep[Theorem 1.3]{HM:Combinatorics} the coefficient on the product of the indeterminates they determine is precisely
  \begin{align*}
    \sum_{m=\max(k_1,k_2)-1}^{p^y-1}\binom{m}{k_1-1} \binom{m}{k_2-1} &= \sum_{m=1}^{p^y-1}\binom{m}{k_1-1} \binom{m}{k_2-1}.
  \end{align*}
  Since the product of indeterminates determined by a pair of maximal length strictly increasing sequences is precisely $\Upsilon(L,y)$, we can then apply \cref{lem:binom-vanish} to complete the proof.
\end{proof}
To provide a clarifying visual, the result says that the upper left $(p^y+1)/2\times (p^y+1)/2$ block of $Y_{L,y}(A)$ has at most one non-zero entry for any $A$, and this entry occurs in the bottom right corner.

Finally, we then connect this back to \cref{lem:p-roots}.
\begin{thm}\label{thm:block-comp}
  Let $1\neq Y,X\in P$ have block decompositions
  \begin{gather*}
    X = \begin{pmatrix}
      L^T&A\\ 0 &L\inv
    \end{pmatrix},\\
    Y = \begin{pmatrix}
      K^T & B\\ 0&K\inv
    \end{pmatrix}.
  \end{gather*}
  Suppose for some $k\in\BN$ we have $X^{p^k}=Y$.  Set $s=(p^k+1)/2$.  Then the upper-left $s\times s$ block of $B$ is equal to $(-1)^{k(p-1)/2}A_{1,1}\Upsilon(L,k)E_{s,s}$.
\end{thm}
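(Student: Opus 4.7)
The plan is to combine \cref{lem:p-roots} with \cref{thm:upsilon}; the only substantive additional work is verifying that right-multiplication by the upper-triangular factor $L^{1-p^k}$ does not spoil the zero pattern established there. I would begin by invoking \cref{eq:powers} with $j=p^k$, which identifies the upper-right block of $X^{p^k}$ as $Y_{L,k}(A)L^{1-p^k}$. Equating with the upper-right block of $Y$ gives $B=Y_{L,k}(A)L^{1-p^k}$, so the problem reduces to computing the upper-left $s\times s$ block of this product, where $s=(p^k+1)/2$.

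Next, by $\BF_q$-linearity of $Y_{L,k}$ I would write $Y_{L,k}(A)=\sum_{s',t'}A_{s',t'}Y_{L,k}(E_{s',t'})$ and feed this into \cref{thm:upsilon}. Setting $Z:=Y_{L,k}(A)$, for $i,j\leq s$ the theorem yields
\[ Z_{i,j}=(-1)^{k(p-1)/2}A_{1,1}\Upsilon(L,k)\delta_{i,s}\delta_{j,s}, \]
from which I read off three structural facts: $Z_{i,c}=0$ whenever $i<s$ and $c\leq s$; $Z_{s,c}=0$ whenever $c<s$; and $Z_{s,s}=(-1)^{k(p-1)/2}A_{1,1}\Upsilon(L,k)$.

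Finally, since $UT(n,q)$ is a subgroup of $\GL{n}{\BF_q}$, the matrix $M:=L^{1-p^k}$ is unitriangular, so $M_{c,j}=0$ for $c>j$ and $M_{j,j}=1$. Expanding $(ZM)_{i,j}=\sum_{c\leq j}Z_{i,c}M_{c,j}$ for $i,j\leq s$, the summation runs over $c\leq j\leq s$; the structural facts force every term to vanish except when $i=j=s$, in which case only $c=s$ survives and yields $(ZM)_{s,s}=Z_{s,s}\cdot M_{s,s}=(-1)^{k(p-1)/2}A_{1,1}\Upsilon(L,k)$. This is precisely the asserted equality of upper-left $s\times s$ blocks. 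The main obstacle was already overcome in \cref{thm:upsilon}; the remaining argument relies only on the fact that an upper unitriangular matrix's nonzero entries lie on or above the diagonal, so no contribution from entries of $Z$ outside its upper-left $s\times s$ block can seep into that block after multiplying by $M$.
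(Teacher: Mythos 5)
Your proof follows the paper's argument essentially step by step: apply \cref{lem:p-roots} with $j=p^k$ to identify $B=Y_{L,k}(A)L^{1-p^k}$, invoke \cref{thm:upsilon} to pin down the upper-left $s\times s$ block of $Y_{L,k}(A)$, and use unitriangularity of $L^{1-p^k}$ to carry that block through the final multiplication (a step you spell out in more detail than the paper, which is fine). The one thing you skip is that the paper opens by noting the hypothesis $Y\neq 1$ together with \cref{lem:L-order} forces $p^k<2n$ (equivalently $k<r=\lceil\log_p(2n)\rceil$): this is what makes $\Upsilon(L,k)$ and $E_{s,s}$ well-defined in the statement, and is also the hypothesis $y<r$ needed to apply \cref{thm:upsilon} with $y=k$. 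Without that remark your invocation of \cref{thm:upsilon} is not yet licensed, so you should add it; otherwise the argument is complete.
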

\begin{proof}
  Let notation be as in the statement. The assumption $X^{p^k}=Y\neq 1$ ensures that the statement is well-defined: by \cref{lem:L-order} the dimension $2n$ necessarily satisfies $2n> p^k$. By \cref{lem:p-roots} the upper-left $s\times s$ block of $B$ is equal to the upper-left $s\times s$ block of $Y_{L,k}(A) L^{1-p^k}$.  By \cref{thm:upsilon}, $Y_{L,k}(A)$ has its upper-left $s\times s$ block equal to $(-1)^{k(p-1)/2}A_{1,1}\Upsilon(L,k)E_{s,s}$. Since $L^{1-p^k}$ is unitriangular it follows that the upper-left $s\times s$ block of $B$ is precisely $(-1)^{k(p-1)/2}A_{1,1}\Upsilon(L,k)E_{s,s}$, as desired.
\end{proof}

\begin{thm}\label{thm:all-res}
  Fix an odd prime $p$ and $n\in\BN$ such that $r=\lceil\log_p(2n)\rceil\geq 2$. Let $P$ be the Sylow $p$-subgroup of $\Sp_{2n}(q)$, where $q$ is some power of $p$, defined above. For each $1\leq j<r$ let $E(j)$ denote the $n\times n$ elementary matrix with a 1 in the $((p^j+1)/2,(p^j+1)/2)$ position.  Then for all $1\leq j < r$, $0\neq x\in QR(q)$, and
  \begin{align}\label{eq:g-def-0}
    g_j = \begin{pmatrix} I_n & (-1)^{j(p-1)/2}E(j)\\ 0 & I_n\end{pmatrix}\in P
  \end{align}
  there exists $X\in P$ with
  \begin{gather*}
    X = \begin{pmatrix}
      L^T & A\\ 0 & L\inv
    \end{pmatrix}
  \end{gather*}
  such that $X^{p^j}=g_j$ and $A_{1,1}=x$.
\end{thm}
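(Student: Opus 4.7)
The plan is to construct $X$ explicitly, choosing $L$ with a controlled block structure so that the computation of $X^{p^j}$ reduces to an $s\times s$ problem, where $s=(p^j+1)/2$. Note $s\leq n$ because $j<r$ gives $p^j\leq p^{r-1}\leq 2n-1$. Since $x$ is a nonzero quadratic residue, so is $x^{-1}$ by \cref{lem:residues}, and we may fix $c\in\BF_q^\times$ with $c^2=x^{-1}$. Set
\[
L = I_n + c E_{1,2} + \sum_{i=2}^{s-1} E_{i,i+1},
\]
which decomposes as a block diagonal matrix with upper block $L'\in UT(s,q)$ (superdiagonal entries $c,1,\ldots,1$) and lower block $I_{n-s}$. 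Then $L-I$ is nilpotent of index at most $s\leq p^j$, so $L^{p^j}=I_n$, and by \cref{df:upsilon} we have $\Upsilon(L,j)=c^2=x^{-1}$.

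Next, set $A_{11} = x E_{1,1} (L')^{-1}$ and take $A$ block diagonal with upper block $A_{11}$ and lower block $0_{n-s}$. Since $(L')^{-1}$ is unitriangular, $A_{1,1}=x$; and $AL$ is block diagonal with blocks $xE_{1,1}$ and $0$, which are symmetric, so $X := \begin{pmatrix} L^T & A\\ 0 & L^{-1}\end{pmatrix}\in P$. Applying \cref{lem:p-roots} with $L^{p^j}=I_n$ gives
\[
X^{p^j} = \begin{pmatrix}I_n & Y_{L,j}(A) L\\ 0 & I_n\end{pmatrix}.
\]
The block diagonal form of $L$ yields $L^m = \mathrm{diag}((L')^m, I_{n-s})$, so $Y_{L,j}(A)$ is block diagonal with upper block $Y_{L',j}(A_{11})$ and lower block $0$. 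We then apply \cref{thm:upsilon} to $L'\in UT(s,q)$ with $y=j$: the hypothesis $y<\lceil\log_p(2s)\rceil = j+1$ holds, and all relevant indices lie in $\{1,\ldots,s\}$, so linearity of $Y_{L',j}$ in its argument leaves only the $(1,1)$-entry of $A_{11}$ contributing, giving
\[
Y_{L',j}(A_{11}) = (A_{11})_{1,1}(-1)^{j(p-1)/2}\Upsilon(L',j) E_{s,s} = (-1)^{j(p-1)/2}E_{s,s}.
\]
Since the last row of $L'$ is $e_s^T$, we have $E_{s,s}L' = E_{s,s}$, whence $Y_{L,j}(A)L = (-1)^{j(p-1)/2}E_{s,s}$ as an $n\times n$ matrix and $X^{p^j} = g_j$.

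The main obstacle is that \cref{thm:upsilon} (and \cref{thm:block-comp}) only pin down the upper-left $s\times s$ block of $Y_{L,j}(A)L$; for general $L$ the other entries could be nonzero and would have to be matched against the zero entries of $g_j$. The block diagonal choice of $L$ and $A$ circumvents this by forcing $Y_{L,j}(A)L$ into block diagonal form with a zero lower block, reducing the verification entirely to the $s\times s$ calculation handled by \cref{thm:upsilon}.
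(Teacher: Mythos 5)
Your proof is correct and follows essentially the same route as the paper's: both reduce the computation to the upper-left $s\times s$ block using \cref{lem:p-roots} and \cref{thm:upsilon}, and both produce a solution whose $L$ and $A$ are supported on the first $s$ coordinates (the paper phrases this as a two-step argument, first solving the case $2n=p^j+1$ and then embedding into the larger Sylow subgroup, whereas you build the block-diagonal $L$ and $A$ directly). Your version is somewhat more explicit, writing down concrete formulas for $L$ and $A$ where the paper only asserts that the super-diagonal entries can be adjusted to achieve $\Upsilon(L,j)=x^{-1}$ and that a suitable symmetric $A$ exists.
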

\begin{proof}
  Consider first the special case $2n=p^j+1$.  From \cref{lem:p-roots,thm:block-comp} it follows that $X^{p^j}=g_j$ if and only if
  \begin{gather*}
    A_{1,1}\Upsilon(L,j) = 1.
  \end{gather*}
  In particular, some solution to $X^{p^j}=g_j$ exists. For any $0\neq x\in QR(q)$, we may change the super-diagonal entries of $L$ to yield a matrix $L'$ that satisfies $\Upsilon(L',j)=x\inv$ and then define $A'$ to be any suitable matrix with $A'_{1,1}=x$.  This gives the desired result when $2n=p^j+1$.

  For $2n>p^j+1$, the Sylow $p$-subgroup of $\Sp_{p^j+1}(q)$ embeds into the Sylow $p$-subgroup of $\Sp_{2n}(q)$ by
  \[ \begin{pmatrix}
    L^T& A\\ 0 & L\inv
  \end{pmatrix} \mapsto \begin{pmatrix}
    \begin{pmatrix}
      L^T&0\\0&1
    \end{pmatrix} & \begin{pmatrix}
      A & 0\\ 0 & 0
    \end{pmatrix}\\
    0 & \begin{pmatrix}
      L\inv & 0\\ 0 &1
    \end{pmatrix}
  \end{pmatrix}.\]
  This embedding maps any solution from the case $2n=p^j+1$ to a solution for the case $2n>p^j+1$, and so completes the proof.
\end{proof}

%
%
\section{Sylow subgroups via characters}\label{sec:sylow-characters}

We now have all of the ingredients necessary to establish the non-$FSZ$ properties for Sylow subgroups with suitable $p,q,n$.
\begin{thm}\label{thm:sylow-fsz}
  Let $p>3$ be an odd prime with $p\equiv 1\bmod 4$, and $q$ any odd power of $p$.  Then for any $j\in\BN$ the Sylow $p$-subgroups of $\Sp_{p^j+1}(q)$ and $\PSp_{p^j+1}(q)$ are non-$FSZ_{p^j}$.
\end{thm}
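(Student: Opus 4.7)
The plan is to apply Schauenburg's criterion (\cref{thm:peter}) with $z = g_j$ from \cref{thm:all-res} and a non-trivial linear character $\xi_\lambda$ of $P$ from \cref{cor:P-character}, showing $\beta_{p^j}(\xi_\lambda, g_j)\notin\BQ$. The first step is to check $g_j\in Z(P)$: in dimension $2n = p^j+1$ the index $s := (p^j+1)/2$ coincides with the matrix size $n$, so $E(j) = E_{n,n}$, and for any unitriangular $M$ both $M^T E_{n,n}$ and $E_{n,n} M\inv$ reduce to $E_{n,n}$ (each simply selects the last row or column of a unitriangular matrix, which is $e_n$). A direct block multiplication then shows $g_j$ commutes with every element of $P$, so $C_P(g_j) = P$ and $\xi_\lambda$ is a valid test character.

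Expanding the definition of $\beta$ and using the linearity of $\xi_\lambda$ gives
\[
\beta_{p^j}(\xi_\lambda, g_j) = \Big|\sum_{X \in S}\lambda(X_{A,1,1})\Big|^2, \qquad S := \{X \in P : X^{p^j} = g_j\},
\]
where $X_{A,1,1}$ denotes the $(1,1)$-entry of the $A$-block of $X$. By \cref{lem:L-order} the exponent of $UT(n,q)$ is exactly $p^j$, so $L^{p^j} = I$ automatically, and \cref{thm:block-comp} reduces $X^{p^j} = g_j$ to the single scalar condition $A_{1,1}\,\Upsilon(L,j) = 1$ together with the usual symmetry $AL = (AL)^T$. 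Hence $A_{1,1}$ is forced to equal $\Upsilon(L,j)\inv \in QR(q)\setminus\{0\}$. For each fixed $x \in QR(q)\setminus\{0\}$ I would count the admissible pairs $(L,A)$ in two independent factors: the number $N$ of unitriangular $L$ with $\Upsilon(L,j) = x\inv$ is independent of $x$ (free non-zero choice of $l_{i,i+1}$ for $i\geq 2$, exactly two choices for $l_{1,2}$, and free choice of the remaining strictly-upper entries), while the number $Q$ of $A$ with $A_{1,1} = x$ and $AL$ symmetric equals $q^{n(n+1)/2 - 1}$ independently of $L$ and $x$ (the map $A \mapsto AL$ is a bijection and $(AL)_{1,1} = A_{1,1}$, so $A_{1,1}$ is a free coordinate on the space of symmetric matrices).

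Setting $M := NQ > 0$, the character sum collapses to a classical quadratic Gauss sum:
\[
\beta_{p^j}(\xi_\lambda, g_j) = M^2\Big|\sum_{x\in QR(q)\setminus\{0\}}\lambda(x)\Big|^2 = \frac{M^2}{4}\bigl|{-1} + \legendre{z}{q}G(q)\bigr|^2,
\]
where $\lambda(x) = e_q(zx)$ for some $z\in\BF_q^\times$ and the second equality uses the identity $\sum_{x \in \BF_q^\times}(1 + \legendre{x}{q})\lambda(x) = -1 + \legendre{z}{q}G(q)$. The hypotheses $p\equiv 1\bmod 4$ and that $q$ is an odd power of $p$ give $G(q) = \sqrt{q}$ via the formulas recalled in \cref{sub:field-chars}, and $\sqrt{q}$ is irrational because $q$ is not a perfect square in $\BQ$. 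Since $\legendre{z}{q} = \pm 1$, this forces $\beta_{p^j}(\xi_\lambda, g_j) = \tfrac{M^2}{4}\bigl(1 + q - 2\legendre{z}{q}\sqrt{q}\bigr)\notin\BQ$, and Schauenburg's criterion then yields that $P$ is non-$FSZ_{p^j}$; the conclusion for the Sylow $p$-subgroup of $\PSp_{p^j+1}(q)$ is automatic since it is isomorphic to that of $\Sp_{p^j+1}(q)$. The delicate steps are verifying centrality of $g_j$ (which depends crucially on the equality $2n = p^j+1$) and cleanly isolating the dependence of $|S \cap \{X_{A,1,1}=x\}|$ on $x$ purely through the Gauss sum; the hypothesis $p\equiv 1\bmod 4$ enters only at the very end, to ensure that $G(q)$ is a real irrational rather than a purely imaginary number (the latter would yield a rational $\beta$).
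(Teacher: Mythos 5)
Your proof is correct and follows the paper's overall strategy closely: the same central element $g_j$, the same linear characters $\xi_\lambda$ pulled back through $\kappa$, and the same reduction of $\beta_{p^j}(\xi_\lambda,g_j)$ to a character sum over nonzero quadratic residues via the solution parametrization $A_{1,1}\Upsilon(L,j)=1$. Where you diverge is the endgame. The paper expands to the double sum $\sum_{x,y\in QR}\lambda(x-y)$, invokes \cref{lempart:residues-3} to reduce to $\sum_{z\in QR}\lambda(z)$, and then appeals to linear independence of roots of unity together with the preimage-counting result \cref{thm:char-residues}. You instead evaluate $\sum_{x\in QR^\times}\lambda(x)$ outright as $\tfrac{1}{2}\bigl(-1+\legendre{z}{q}G(q)\bigr)$ using the classical Gauss sum identity, arriving at the explicit closed form $\beta_{p^j}(\xi_\lambda,g_j)=\tfrac{M^2}{4}\bigl(1+q-2\legendre{z}{q}\sqrt{q}\bigr)$ whose irrationality under the hypotheses is immediate. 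Since \cref{thm:char-residues} is itself proved via Gauss sums, the two routes are ultimately equivalent, but yours is shorter, needs neither \cref{lempart:residues-3} nor \cref{thm:char-residues}, and delivers the $\beta$-value explicitly rather than as an implicit comparison. You also supply the factorization argument (independent counting of $L$ with prescribed $\Upsilon$ and of symmetric $A$ with prescribed $(1,1)$-entry) that justifies the paper's assertion that each nonzero residue is hit equally often, a point the paper states without such detail. The centrality check for $g_j$ you include is correct and is used implicitly by the paper (it is needed for $\xi_\lambda$ to be a character of $C_P(g_j)$).
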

\begin{proof}
  Let $p$ be an odd prime and $q$ a power of $p$, fix $j\in\BN$, and define $n\in\BN$ by $2n=p^j+1$. The Sylow subgroups in question are isomorphic, so we need only consider the Sylow $p$-subgroup of $\Sp_{2n}(q)$.  We work in the standard Sylow $p$-subgroup given in \cref{sec:sylows}.

  We define $\sigma_j = (-1)^{(p^j-1)/2} = (-1)^{j(p-1)/2}$.  Note that $\sigma_j=1$ whenever $p\equiv 1\bmod 4$, and when $p\equiv 3\bmod 4$ then $\sigma_j = (-1)^j$.  We define $g_j\in P$ by \cref{eq:g-def-0}, so that
  \begin{align}\label{eq:g-def}
    g_j = \begin{pmatrix}
      1 & (-1)^{j(p-1)/2}E_{n,n}\\
      0&1
    \end{pmatrix}.
  \end{align}

  By \cref{thm:all-res,thm:block-comp} solutions to $X^{p^j}=g_j$ exist and are completely determined by the condition $A_{1,1}\Upsilon(L,j) = 1$.  Of necessity $0\neq\Upsilon(L,j)\in QR(q)$.  Moreover, every non-zero quadratic residue is achieved as the value $\Upsilon(L,j)$ an equal number of times among the solutions to $X^{p^j}=g_j$.

  Next let $\psi=\psi_{p^j,g_j}$ be defined as in \cref{df:psi-beta}.  For any non-trivial character $\lambda$ of $\BF_q^+$, we define the linear character $\xi_\lambda$ of $P$ as in \cref{cor:P-character}.  Then we have
  \[ \ip{\xi_\lambda}{\psi} = \alpha \sum_{x\in QR(q)}\lambda(x)\]
  for some $0\neq\alpha\in\BQ$.  It follows that
  \begin{equation*}
    \beta_{p^j}(\xi_\lambda,g_j) \sim_\BQ \sum_{x,y\in QR(q)}\lambda(x-y).
  \end{equation*}

  By \cref{lempart:residues-3} every element of $\BF_q^\times$ is expressible as a difference of quadratic residues the same number of ways if and only if $-1\not\in QR$.  In this case we immediately conclude that $\beta_{p^j}(\xi_{\lambda},g_j)\in \BQ$, and Schauenburg's criterion is inconclusive.  On the other hand, when $-1\in QR$, we have that either $p\equiv 1\bmod 4$ or that $q$ is an even power of $p$, and we may write
  \begin{equation}\label{eq:sylow-beta-2}
    \beta_{p^j}(\xi_{\lambda},g_j) \sim_\BQ \sum_{\substack{x,y,z\in QR(q)\\0\neq z=x-y}}\lambda(z).
  \end{equation}
  Since this sum is now only over $0\neq z\in QR(q)$, and $-1\in QR(q)$, by \cref{lempart:residues-3} we have
  \begin{equation}\label{eq:sylow-beta-3}
    \beta_{p^j}(\xi_{\lambda},g_j)\sim_\BQ\sum_{z\in QR(q)}\lambda(z).
  \end{equation}

  Fix some primitive $p$-th root of unity $\mu_p\in\BC$.   We have that $\sum_{i=1}^{p-1}\mu_p^i=-1$ and that every proper subset of $\{\mu_p^i\}_{i=0}^{p-1}$ is linearly independent over $\BQ$.  So we conclude that the sum in \cref{eq:sylow-beta-3} gives a rational value if and only if every value of $\mu_p^i$ for $0<i<p$ appears equally often as the image under $\lambda$ of an element of $QR(q)$.  By the last two parts of \cref{thm:char-residues} this is equivalent to $q$ being an even power of $p$.  Therefore by Schauenburg's criterion we obtain the desired result.
\end{proof}

The specific choice of dimension $2n=p^j+1$ was necessary in order to ensure that each non-zero quadratic residue appears an equal number of times as the value of $A_{1,1}$ in the solutions to $X^{p^j}=g_j$. Without this fact, it is conceivably possible (in the sense that the author was not able to rule it out) that some quadratic residues appear more often than others, and that things might manage to perfectly balance each other out and therefore yield a rational $\beta$ value.

\begin{example}
    Using \citet{GAP4.8.4}, the author was able to verify that the Sylow $5$-subgroup $P$ of $\Sp_{8}(5)$ is non-$FSZ_5$ at an element $z\in Z(P)$.  In particular, there is a linear character $\chi\in\widehat{P}$ with $\beta_5(\chi,z)\not\in\BQ$.  The GAP routines of \citep{PS16} were unable to handle this group on the author's computer.  Instead the author computed only the linear characters and a corresponding $\beta$ value.

    On the other hand, the GAP routines of \citep{PS16} are sufficient (with many days of waiting) to show that the Sylow 7-subgroup of $\Sp_8(7)$ is $FSZ$.
\end{example}
As such the preceding result is not the best possible, and we suspect that the result should hold in much greater generality.
\begin{question}\label{q:sylow}
  Let $P$ be the Sylow $p$-subgroup of $\Sp_{2n}(q)$.  Set $r=\lceil\log_p(2n)\rceil$.  Which, if any, of the following are equivalent?
  \begin{enumerate}
    \item $P$ is non-$FSZ_{p^j}$ for some $1\leq j<r$;
    \item $P$ is non-$FSZ_{p^j}$ for all $1\leq j<r$;
    \item $P$ is non-$FSZ$ at some $z\in Z(P)$;
    \item $r>1$, $q$ is an odd power of $p$, and $p\equiv 1\bmod 4$.
  \end{enumerate}
\end{question}

\section{Sylow subgroups via counting}\label{sec:sylow-counting}
Our goal for this section is to provide a second, combinatorial proof of \cref{thm:sylow-fsz} using \cref{df:FSZ}.  The proof naturally starts off the same.

\begin{proof}
    Let assumptions and notation be as in the statement of \cref{thm:sylow-fsz}. So, in particular, we have $p\equiv 1\bmod 4$ and that $q$ is an odd power of $p$.

    We define $g_j\in P$ by \cref{eq:g-def}, where by assumptions on $p$ the term $(-1)^{j(p-1)/2}$ is always $1$.
    Fix any $d\in\BN$ with $(d,|G|)=1$.  Observe that
    \[ g_j^d = I_{2n}+ d E_{n,2n} = \begin{pmatrix}
      1 & d E_{n,n}\\
      0&1
    \end{pmatrix}.\]

    By \cref{thm:all-res,thm:block-comp} and their proofs solutions to $X^{p^j}=g_j^d$ with $X\in P$ exist and are completely determined by the condition $A_{1,1}\Upsilon(L,j) = d$.  Of necessity $0\neq\Upsilon(L,j)\in QR(q)$ and $\legendre{A_{1,1}}{q}=\legendre{d}{q}$.

    Now define $U\in P$ by
  \begin{align}\label{eq:U-def}
    U = \begin{pmatrix}
    I_n+E_{2,1}& E_{1,1}\\
    0& I_n-E_{1,2}
  \end{pmatrix}.
  \end{align}
  Applying \cref{thm:all-res,thm:block-comp} and their proofs again we see that the elements $X\in P$ with $(XU)^{p^j}=g^d$ are precisely those matrices satisfying
  \[ (A_{1,1}+1)(l_{1,2}+1)^2 \prod_{i=2}^{n-1} l_{i,i+1}^2=d.\]

  It suffices to determine conditions that guarantee that there is a different number of solutions to the equations $X^{p^j}=(XU)^{p^j}=g_j$ and $X^{p^j}=(XU)^{p^j}=g_j^{d}$ for some integer $d\in\{2,...,p-1\}$. We let $d_1=1$ and fix $d_2\in\{2,...,p-1\}$.

  Now $X^{p^j}=(XU)^{p^j}=g^{d_k}$ for $k=1$ or $k=2$ if and only if
  \[ A_{1,1}l_{1,2}^2 \prod_{i=2}^{n-1}l_{i,i+1}^2 = (A_{1,1}+1)(l_{1,2}+1)^2 \prod_{i=2}^{n-1} l_{i,i+1}^2 = d_k.\]
  We observe that $\prod_{i=2}^{n-1}l_{i,i+1}^2$ is a non-zero quadratic residue in $\BF_q$, and moreover that every non-zero quadratic residue can be obtained this way an equal number of times among solutions to either equation.

  For $k=1,2$ and $0\neq y\in QR(q)$, we define the two variable polynomials
  \begin{align}
    f_{k,y}(a,b) &= ab^2-d_k y\\
    g_{k,y}(a,b) &= (a+1)(b+1)^2 - d_k y.
  \end{align}
  The result follows if we can show that the number of pairs $(a,b)$ such that there exists $0\neq y\in QR$ with $f_{k,y}(a,b)=0=g_{k,y}(a,b)$ is different for $k=1,2$ for some choice of $d_2$. Indeed, we claim that the number of such pairs depends on whether or not $d_2$ is a quadratic residue.  By assumptions we note that any such pairs have $a,b\not\in\{0,-1\}$.

  Now $f_{k,y}(a,b)=g_{k,y}(a,b)$ if and only if
  \[\frac{a}{a+1} = \Big(\frac{b+1}{b}\Big)^2.\]
  Let $r_1= ((b+1)/b)^2$, which we observe is a non-zero quadratic residue.  We can solve for $a$ to get
  \[ a = \frac{r_1}{1-r_1}.\]
  Note that by definition $r_1\neq 1,0$, which then implies $a\neq 0,-1$.  Now set $r_2 = y/b^2$, and observe that $r_2$ is also a non-zero quadratic residue.  The equation $f_{k,y}(a,b)=0$ can then be rewritten as \[r_1=d_k r_2(1-r_1).\]

  So if $d_k\in QR(q)$ we must have that $1+(-r_1)$, which is a sum of quadratic residues, is itself a quadratic residue.  By \cref{lem:residues} there are $(q+3)/4$ such values of $r_1$, but since $r_1\neq 0,1$, we have $(q-5)/4$ choices of $r_1$.  For such a choice, $r_2$ is uniquely determined, and then since $r_2=y/b^2$ we see that there are precisely two pairs $(a,b)$ yielding the pair of quadratic residues $(r_1,r_2)$ for some $0\neq y\in QR$. Thus there are $(q-5)/2$ solutions for $d_k\in QR(q)$.

  On the other hand, if $d_k\not\in QR(q)$ we must have that $1+(-r_1)$, which is a sum of quadratic residues, is itself not a quadratic residue.  Applying \cref{lem:residues} again we see that there are $(q-1)/4$ such choices for $r_1$, which again uniquely determines a quadratic residue $r_2$, and that there are precisely two pairs $(a,b)$ yielding the pair of quadratic residues $(r_1,r_2)$ for some $0\neq y\in QR$.  Thus there are \[(q-1)/2 = (q-5)/2 +2\] solutions for $d_j\not\in QR(q)$.

  Assumptions on $p,q$ guarantee that $QR(q)$ does not contain the entire prime sub-field.  So by taking $d_2\in\BZ_p\setminus QR(q)$ and applying \cref{df:FSZ} to the sets $G_{p^j}(U,g_j)$ and $G_{p^j}(U,g_j^{d_2})$ we obtain the desired result.
\end{proof}

\begin{example}
  As a consequence of the result, the Sylow $5$-subgroup $P$ of $G=\PSp_6(5)$ is non-$FSZ_5$, and exactly one of the two sets $P_5(U,g_1), P_5(U,g_1^2)$ is empty.
\end{example}

\section{The full group and centralizer}\label{sec:symplectic}
Throughout this section we fix $j\in\BN$, define $n\in\BN$ by $2n=p^j+1$, and set $G=\Sp_{2n}(q)$, $H=\PSp_{2n}(q)$. We also define $g_j\in G$ by \cref{eq:g-def}, and let $[g_j]\in H$ be the image of $g_j$ under the quotient map.
\begin{lem}\label{lem:cent-1}
   With notation as above, $C_G(g_j)$ consists of those $M\in G$ with the block form
   \begin{equation}\label{eq:cent-form-1}
    \begin{pmatrix}
      \begin{pmatrix}
        X' & 0\\
        x_2& \Lambda
      \end{pmatrix} &
      \begin{pmatrix}
        A' & a_1\\
        a_2 & a_3
      \end{pmatrix}\\
      \begin{pmatrix}
        B'&0\\
        0&0
      \end{pmatrix} &
      \begin{pmatrix}
        Y' & y_2\\
        0 & \Lambda
      \end{pmatrix}
    \end{pmatrix},
  \end{equation}
  where $\Lambda\in\kk^\times$ is a scalar and $X'$ is an $(n-1)\times (n-1)$ matrix.
\end{lem}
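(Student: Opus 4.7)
The plan is to reduce the centralizer condition to an elementary matrix commutation and then translate the resulting entry-by-entry conditions into the claimed block pattern. Writing $g_j = I_{2n} + \sigma E_{n,2n}$ with $\sigma = (-1)^{j(p-1)/2} \in \{\pm 1\}$, the identity $Mg_j = g_j M$ is equivalent to $\sigma(M E_{n,2n} - E_{n,2n} M) = 0$, and since $\sigma$ is a unit this in turn reduces to $M E_{n,2n} = E_{n,2n} M$.

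First I would compute both sides of this equation coordinate-wise. A direct calculation gives $(M E_{n,2n})_{i,k} = M_{i,n}$ when $k = 2n$ and zero otherwise, while $(E_{n,2n} M)_{i,k} = M_{2n,k}$ when $i = n$ and zero otherwise. Equating these entry by entry yields three families of conditions that are both necessary and sufficient for commutation with $g_j$: (a) $M_{i,n} = 0$ for all $i \neq n$; (b) $M_{2n,k} = 0$ for all $k \neq 2n$; and (c) $M_{n,n} = M_{2n,2n}$.

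Next I would translate these scalar conditions into the $n \times n$ block decomposition $M = \begin{pmatrix} X & A \\ B & Y \end{pmatrix}$. Condition (a) says column $n$ of $X$ is zero above row $n$ and column $n$ of $B$ is zero entirely; condition (b) says row $n$ of $B$ is zero entirely and row $n$ of $Y$ is zero before column $n$; condition (c) forces a common value $\Lambda := X_{n,n} = Y_{n,n}$. Splitting each of $X$, $B$, and $Y$ further into their first $n-1$ rows/columns versus the last recovers exactly the block pattern displayed in (\ref{eq:cent-form-1}), with $A$ completely unconstrained and $B$ supported on its upper-left $(n-1)\times(n-1)$ corner $B'$. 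Intersecting the solution set with $G$ gives $C_G(g_j)$.

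This argument is essentially a bookkeeping exercise; no use of the symplectic relation (\ref{eq:symp-rel}) enters beyond the ambient requirement $M \in G$, and the only real care needed is in lining up the indices across the block boundary between rows/columns $n$ and $n+1$. Hence there is no conceptual obstacle, and the main step to execute carefully is the index translation in the second paragraph.
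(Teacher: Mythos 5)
Your proof is correct and follows essentially the same route as the paper: reduce commuting with $g_j$ to commuting with $E_{n,2n}$, then read off the column-$n$ and row-$2n$ constraints from the structure of $M E_{n,2n}$ and $E_{n,2n} M$. The paper's version is terser (it leaves the index bookkeeping as ``the desired result then follows''), but the key step and its translation into the block pattern of \cref{eq:cent-form-1} are identical.
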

\begin{proof}
  We have that $M\in C_G(g_j)$ if and only if $Mg_j = g_jM$.  Since $g_j= I_{2n}+(-1)^{j(p-1)/2}E_{n,2n}$, this is equivalent to $E_{n,2n}M = M E_{n,2n}$.  We have that $M E_{i,j}$ is the matrix whose $j$-th column is the $i$-th column of $M$ and all other entries zero.  Similarly, $E_{i,j} M$ is the matrix whose $i$-th row is the $j$-th row of $M$, and all other entries zero.  The desired result then follows.
\end{proof}
As in the remarks following \cref{lem:kernels}, we seek an $FSZ$ normal subgroup of $C_G(g_j)$ whose corresponding quotient group has irreducible representations that are relatively nice to compute with.  To this end we have the following.
\begin{prop}\label{prop:quot}
    There is a surjective group homomorphism \[\pi\colon C_G(g_j)\to \Sp_{p^j-1}(q)\times \{\pm 1\},\] where $\{\pm 1\}\cong \BZ_2$ in the usual fashion.

    Explicitly, writing $M\in C_G(g_j)$ as in \cref{lem:cent-1} we have
    \[ \pi(M) = (\begin{pmatrix}
      X' & A'\\
      B' & Y'
    \end{pmatrix}, \Lambda).\]
    In particular, as a scalar $\Lambda=\pm 1$.
\end{prop}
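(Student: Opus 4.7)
The plan is to exploit the block structure already pinned down in \cref{lem:cent-1} and to read the symplectic condition \cref{eq:symp-rel} off block by block. Writing an element $M\in C_G(g_j)$ in the form \cref{eq:cent-form-1} and expanding $\begin{pmatrix}X&A\\B&Y\end{pmatrix}\begin{pmatrix}Y^T&-A^T\\-B^T&X^T\end{pmatrix}=I_{2n}$, the content of the symplectic relation is the three identities $XY^T - AB^T = I_n$, $XA^T=(XA^T)^T$, and $BY^T=(BY^T)^T$. Substituting the block expressions from \cref{eq:cent-form-1}, the $(n,n)$ scalar entry of $XY^T-AB^T$ equals $\Lambda^2$, so we must have $\Lambda^2=1$ and hence $\Lambda=\pm 1$. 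Simultaneously the upper-left $(n-1)\times(n-1)$ corners of the three identities collapse (since the last row of $B$ and the last column of $X$ kill the bottom pieces $x_2,\Lambda$ and $0,0$) to $X'(Y')^T-A'(B')^T=I_{n-1}$, $X'(A')^T$ symmetric, and $B'(Y')^T$ symmetric. Since $2(n-1)=p^j-1$, these are exactly the defining relations for $\begin{pmatrix}X'&A'\\B'&Y'\end{pmatrix}\in\Sp_{p^j-1}(q)$, so $\pi$ is well defined as a map into $\Sp_{p^j-1}(q)\times\{\pm 1\}$.

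Next I would check the homomorphism property by a direct block multiplication of two centralizer elements $M_1,M_2$ written in the form \cref{eq:cent-form-1}. The key observation is that $B_i$ has nothing below or to the right of its $(n-1)\times(n-1)$ block $B'_i$, while $X_i,Y_i$ have a single scalar $\Lambda_i$ in the $(n,n)$ corner with zero column above it (in $X_i$) or zero row below it (in $Y_i$). Consequently the upper-left $(n-1)\times(n-1)$ corner of $(M_1M_2)_X=X_1X_2+A_1B_2$ works out to $X'_1X'_2+A'_1B'_2$, the $X'$-block of the inner product $M'_1M'_2$, and the same happens for $A,B,Y$; meanwhile the $(n,n)$ scalar of $X_1X_2+A_1B_2$ is simply $\Lambda_1\Lambda_2$ by inspection. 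This verifies that the two projections defining $\pi$ are individually multiplicative.

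Finally, for surjectivity I would exhibit an explicit section: given any $M'=\begin{pmatrix}X'&A'\\B'&Y'\end{pmatrix}\in\Sp_{p^j-1}(q)$ and $\varepsilon\in\{\pm 1\}$, set $x_2=0$, $y_2=0$, $a_1=0$, $a_2=0$, $a_3=0$, and $\Lambda=\varepsilon$, and insert the blocks of $M'$ in the primed positions. Under these choices, every auxiliary sub-block equation arising from $XY^T-AB^T=I_n$, $XA^T$ symmetric, $BY^T$ symmetric reduces to $0=0$, while the primary $(n-1)\times(n-1)$ conditions are exactly those satisfied by $M'\in\Sp_{p^j-1}(q)$ and the $(n,n)$ scalar condition reduces to $\varepsilon^2=1$. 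Hence the constructed matrix lies in $G$, commutes with $g_j$ by \cref{lem:cent-1}, and maps to $(M',\varepsilon)$ under $\pi$. The only obstacle is the pure bookkeeping of confirming that the off-diagonal sub-block constraints linking $x_2,y_2,a_1,a_2,a_3,\Lambda$ to $M'$ impose no additional restriction on $M'$ beyond membership in $\Sp_{p^j-1}(q)$, and that they behave correctly under the block multiplication used in the homomorphism step; both amount to routine verifications once the block layout is written out.
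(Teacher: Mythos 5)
Your proposal is correct and takes essentially the same approach as the paper: exploit the block form from \cref{lem:cent-1}, read off \cref{eq:symp-rel} at the level of the primed $(n-1)\times(n-1)$ corners and the $(n,n)$ scalar, check multiplicativity by block multiplication, and produce the same explicit section for surjectivity. The only cosmetic difference is order: the paper establishes multiplicativity first and then derives well-definedness by applying the product formula to $h=t^{-1}$, whereas you read off the symplectic relation on a single $M$ directly; either route works, and aside from the minor slip of saying ``last row of $B$'' where the vanishing last \emph{column} of $B$ and the near-zero last column of $X$ are what actually kill the cross terms, the bookkeeping is sound.
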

\begin{proof}
  Let $t,h\in C_G(g_j)$ have the block decompositions

  \begin{align*}
    t&=\begin{pmatrix}
      \begin{pmatrix}
        X' & 0\\
        x_2& \Lambda
      \end{pmatrix} &
      \begin{pmatrix}
        A' & a_1\\
        a_2 & a_3
      \end{pmatrix}\\
      \begin{pmatrix}
        B'&0\\
        0&0
      \end{pmatrix} &
      \begin{pmatrix}
        Y' & y_2\\
        0 & \Lambda
      \end{pmatrix}
    \end{pmatrix},\\
    h & =
    \begin{pmatrix}
      \begin{pmatrix}
        M & 0\\
        m_2& \Omega
      \end{pmatrix} &
      \begin{pmatrix}
        C & c_1\\
        c_2 & c_3
      \end{pmatrix}\\
      \begin{pmatrix}
        D&0\\
        0&0
      \end{pmatrix} &
      \begin{pmatrix}
        N & n_2\\
        0 & \Omega
      \end{pmatrix}
    \end{pmatrix}.
  \end{align*}
  A straightforward calculation shows that $th$ has the form
  \begin{equation}
    th =
    \begin{pmatrix}
      \begin{pmatrix}
        X'M+A'D & 0\\
        *& \Lambda\Omega
      \end{pmatrix} &
      \begin{pmatrix}
        X'C+A'N & *\\
        * & *
      \end{pmatrix}\\
      \begin{pmatrix}
        B'M+Y'D&0\\
        0&0
      \end{pmatrix} &
      \begin{pmatrix}
        B'C+Y'N & *\\
        0 & \Lambda\Omega
      \end{pmatrix}
    \end{pmatrix},
  \end{equation}
  where the asterisks denote entries we do not need to compute to establish the necessary results.

  Considering the special case $h=t\inv$, by \cref{eq:symp-rel} we see that
  \begin{gather*}
    \begin{pmatrix}
      X' & A'\\
      B' & Y'
    \end{pmatrix}\in\Sp_{p^j-1}(q)\\
    \Lambda \Lambda^T = 1.
  \end{gather*}
  Since $\Lambda$ is a scalar, it follows that $\pi$ is a well-defined group homomorphism. The only claim left to prove is that $\pi$ is surjective.

  Given any
  \begin{gather*}
    \begin{pmatrix}
      X' & A'\\
      B' & Y'
    \end{pmatrix}\in\Sp_{p^j-1}(q)\\
    \Lambda\in\{\pm 1\},
  \end{gather*}
  an easy check of \cref{eq:symp-rel} shows that
  \begin{gather*}
    t=\begin{pmatrix}
      \begin{pmatrix}
        X' & 0\\
        0& \Lambda
      \end{pmatrix} &
      \begin{pmatrix}
        A' & 0\\
        0 & 0
      \end{pmatrix}\\
      \begin{pmatrix}
        B'&0\\
        0&0
      \end{pmatrix} &
      \begin{pmatrix}
        Y' & 0\\
        0 & \Lambda
      \end{pmatrix}
      \end{pmatrix}
  \end{gather*}
  is an element of $G$, and so in $C_G(g_j)$ by \cref{lem:cent-1}, and satisfies
  \[ \pi(t) = (\begin{pmatrix}
      X' & A'\\
      B' & Y'
    \end{pmatrix}, \Lambda).\]
  Therefore $\pi$ is surjective as desired, and this completes the proof.
\end{proof}
At this point we do not yet need that $2n=p^j+1$, only that $2n>p^j$.  In the subsequent, much as with the Sylow subgroups before, the primary reason we restrict to $2n=p^j+1$ is to make it easy to predict and control certain properties of solutions to $a^{p^j}=g_j$.

The following combined with \cref{ex:IMM} shows that the kernel of $\pi$ and the preimage of $\{\pm 1\}$ under $\pi$ are necessarily $FSZ$.
\begin{cor}\label{cor:ker-prop}
  The kernel of the group homomorphism $\pi$ has exponent $p$ and contains $g_j$.
\end{cor}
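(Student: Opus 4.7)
The plan is to identify $\ker\pi$ concretely, observe that every one of its elements has the form $I_{2n}+N$ with $N^2=0$, and conclude that the $p$-th power is trivial. The containment $g_j\in\ker\pi$ will be visible by inspection.

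First, reading off the formula for $\pi$ in \cref{prop:quot}, an element $M\in C_G(g_j)$ lies in $\ker\pi$ precisely when, in the notation of \cref{lem:cent-1}, $X'=I_{n-1}$, $Y'=I_{n-1}$, $A'=0$, $B'=0$, and $\Lambda=1$. For $g_j=I_{2n}+(-1)^{j(p-1)/2}E_{n,2n}$ all of these are satisfied (with $x_2=y_2=a_1=a_2=0$ and $a_3=(-1)^{j(p-1)/2}$), so $g_j\in\ker\pi$ immediately.

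Second, I would impose the symplectic relation \cref{eq:symp-rel} on such an $M$. Since $B=0$, this reduces to the three conditions $XY^T=I_n$, $YX^T=I_n$, and $XA^T=AX^T$. A short block computation then forces $y_2=-x_2^T$ and $a_1=a_2^T$, leaving $x_2,a_2\in\kk^{n-1}$ and $a_3\in\kk$ as the only free parameters. Writing $M=I_{2n}+N$, the matrix $N$ has nonzero entries confined to row $n$ and column $2n$, with $N_{n,n}=N_{2n,2n}=0$. Consequently every entry of $N^2$ except possibly the $(n,2n)$ entry vanishes for support reasons alone. Computing the remaining entry yields
\[(N^2)_{n,2n}=\sum_{k=1}^{n-1}(x_2)_k(a_2)_k+\sum_{k=1}^{n-1}(a_2)_k(-(x_2)_k)=0,\]
where the two sums come from the columns $1,\dots,n-1$ and $n+1,\dots,2n-1$ respectively, and cancel exactly because of the constraints $a_1=a_2^T$ and $y_2=-x_2^T$. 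Hence $N^2=0$.

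Since $\ch(\kk)=p$ and $N^2=0$, the binomial expansion gives $M^p=(I_{2n}+N)^p=I_{2n}+pN=I_{2n}$, so $\exp(\ker\pi)$ divides $p$. As $g_j\in\ker\pi$ has order $p$, the exponent equals $p$. The only real obstacle is the block-level bookkeeping in the second step: one must verify that the symplectic constraints produce exactly the coupling $a_1=a_2^T$ and the sign $y_2=-x_2^T$ needed to make the "hook"-shaped nilpotent $N$ square to zero. Everything else is mechanical.
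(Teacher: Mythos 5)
Your proof is correct and follows essentially the same path as the paper: identify $\ker\pi$ from the block form of \cref{lem:cent-1}, impose the symplectic relation to obtain $y_2=-x_2^T$ and $a_1=a_2^T$, and deduce that $p$-th powers are trivial. The only stylistic difference is that you note $N=M-I_{2n}$ satisfies $N^2=0$ directly (so $M^p=I_{2n}+pN=I_{2n}$), whereas the paper proves $M^s=I_{2n}+sN$ by an explicit induction; these amount to the same computation.
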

\begin{proof}
    By the definition of $\pi$ and \cref{lem:cent-1} we see that the kernel consists precisely of those $M\in C_G(g_j)$ (indeed, those $M\in G$) with block form
    \begin{equation}
      M=\begin{pmatrix}
      \begin{pmatrix}
        I_{n-1} & 0\\
        x& 1
      \end{pmatrix} &
      \begin{pmatrix}
        0 & a_2\\
        a_3 & a_4
      \end{pmatrix}\\
      \begin{pmatrix}
        0&0\\
        0&0
      \end{pmatrix} &
      \begin{pmatrix}
        I_{n-1} & y\\
        0 & 1
      \end{pmatrix}
    \end{pmatrix}.
    \end{equation}

    The element $g_j$, in particular, has this form, so is in the kernel as claimed. Moreover, from \cref{eq:symp-rel} we may conclude that $y=-x^T$, $a_2=a_3^T$, and $a_4-a_4^T= v a_3^T- a_3 v^T$.  A simple induction then shows that
    \begin{equation}
      M^s=\begin{pmatrix}
      \begin{pmatrix}
        I_{n-k} & 0\\
        sx& I_k
      \end{pmatrix} &
      \begin{pmatrix}
        0 & sa_2\\
        sa_2^T & s a_4
      \end{pmatrix}\\
      \begin{pmatrix}
        0&0\\
        0&0
      \end{pmatrix} &
      \begin{pmatrix}
        I_{n-k} & sy\\
        0 & I_k
      \end{pmatrix}
    \end{pmatrix},
    \end{equation}
    for all $s\in\BN$.  Therefore $M$ has order dividing $p$ since $\BF_q$ has characteristic $p$.
\end{proof}
We can now prove our main result on the (projective) symplectic groups.
\begin{thm}\label{thm:symp-main}
  Let notation be as at the start of the section and as in \cref{prop:quot}.  Suppose that $p\equiv 1\bmod 4$ and that $q$ is an odd power of $p$.
  Then $G$ is non-$FSZ_{p^j}$ at $g_j$, and $H$ is non-$FSZ_{p^j}$ at $[g_j]$.
\end{thm}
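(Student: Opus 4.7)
The plan is to verify Schauenburg's criterion \cref{thm:peter} for $C := C_G(g_j)$ at the central element $g_j$ by exhibiting an irreducible character $\chi$ of $C$ with $\beta_{p^j}(\chi, g_j) \notin \BQ$, and then to descend the same $\chi$ to $H$ at $[g_j]$. By \cref{cor:ker-prop} and \cref{ex:IMM} the kernel of $\pi$ (from \cref{prop:quot}) has exponent $p$ and is therefore $FSZ$, so \cref{lem:kernels} motivates the choice $\chi = \chi' \circ \pi$ for an irreducible character $\chi'$ of $\Sp_{p^j-1}(q) \times \{\pm 1\}$. We will take $\chi' = \chi_1 \otimes \mathbf{1}$ where $\chi_1$ is an irreducible character of $\Sp_{p^j-1}(q)$ factoring through $\PSp_{p^j-1}(q)$; this ensures $\chi$ is trivial on $Z(G) = \{\pm I_{2n}\}$ and descends to a character on $C_H([g_j]) = C/Z(G)$, yielding the statement for $H$ simultaneously.

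The first structural step is to show that the Sylow $p$-subgroup $P$ of $G$ from \cref{sec:sylows} is also a Sylow $p$-subgroup of $C$. Every $a \in P$ has $n$-th column equal to $e_n$ and $2n$-th row equal to $e_{2n}^T$ by the unitriangularity of its diagonal blocks $L^T$ and $L^{-1}$, so a direct computation gives $a E_{n, 2n} = E_{n, 2n} = E_{n, 2n} a$. Combined with $g_j = I_{2n} + (-1)^{j(p-1)/2} E_{n, 2n}$ this yields $P \subseteq C$, and since $|P|$ already exhausts the $p$-part of $|G|$ it must exhaust the $p$-part of $|C|$ as well. Consequently every $a$ with $a^{p^j} = g_j$ is a $p$-element of $C$ lying in some $C$-conjugate of $P$, so $S := \{a \in C : a^{p^j} = g_j\}$ is the union of $C$-conjugates of $T := S \cap P$, the latter already parametrized in \cref{sec:sylow-characters} by the equation $A_{1,1}\Upsilon(L, j) = 1$.

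With these reductions, $\sum_{a \in S} \chi(a)$ is a $C$-orbit sum in which $\chi(a) = \chi_1(\pi_1(a))$ depends only on the first component of $\pi(a)$. For $a \in T$ with block form $\begin{pmatrix} L^T & A \\ 0 & L^{-1} \end{pmatrix}$, the image $\pi_1(a)$ has the analogous block form in the Sylow $p$-subgroup of $\Sp_{p^j-1}(q)$ whose top-left entry of the upper-right block is $A_{1,1}$. Selecting $\chi_1$ so that its restriction to that smaller Sylow contains a linear character analogous to $\xi_\lambda$ of \cref{cor:P-character} for some non-trivial $\lambda \colon \BF_q^+ \to \BC^\times$, the computation is expected to parallel that of \cref{thm:sylow-fsz} and yield
\[ \beta_{p^j}(\chi, g_j) \sim_\BQ \sum_{z \in QR(q)} \lambda(z). \]
Under the hypotheses $p \equiv 1 \bmod 4$ and $q$ an odd power of $p$, \cref{thm:char-residues} forces this expression to be irrational, which establishes the result for $G$; the same $\chi$ descends to $H$ and gives $\beta_{p^j}(\chi, [g_j]) \notin \BQ$, yielding the result for $H$ as well.

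The principal obstacle is choosing the character $\chi_1$ and controlling the $C$-class-size weights that enter the orbit sum. One must identify an irreducible character of $\PSp_{p^j-1}(q)$ whose restriction to its Sylow $p$-subgroup contains the desired $\xi_\lambda$-analogue with tractable multiplicity, and verify that the $C$-classes meeting $P$ only in proper subsets do not cancel the irrational contribution. The author's prior work with $\PSp_6(5)$ cited in \cref{sec:choices} provides concrete guidance for this selection, pointing toward small-degree characters such as Weil-type representations.
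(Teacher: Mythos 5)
Your overall framework is correct and matches the paper's: both proofs apply Schauenburg's criterion (\cref{thm:peter}) at $g_j$, both use \cref{lem:kernels} together with \cref{cor:ker-prop} to justify factoring the test character through $\pi$ from \cref{prop:quot}, and both observe that taking a character trivial on the $\{\pm 1\}$ factor simultaneously handles $G$ and $H$. Your structural claim that $P\subseteq C_G(g_j)$ is also correct and verified properly.

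However, there is a genuine gap at exactly the point you flag as the ``principal obstacle,'' and the approach you sketch to close it would not work. You propose choosing $\chi_1$ so that its restriction to the Sylow $p$-subgroup of $\Sp_{p^j-1}(q)$ ``contains a linear character analogous to $\xi_\lambda$'' and then reducing $\beta_{p^j}(\chi,g_j)$ to $\sum_{z\in QR(q)}\lambda(z)$ via a $C$-orbit-sum argument. But $\beta_{p^j}(\chi,g_j)$ is governed by $\sum_{a^{p^j}=g_j}\chi(a)$, which depends on the actual values $\chi(a)=\chi_1(\pi'(a))$ of the irreducible character $\chi_1$ on the relevant unipotent elements, not merely on some linear constituent of $\chi_1|_P$; the other constituents and their multiplicities will contribute as well, and there is no visible mechanism forcing them to cancel or to leave $\sum_z\lambda(z)$ isolated. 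Nothing in the proposal controls this, and the asserted equivalence $\beta_{p^j}(\chi,g_j)\sim_\BQ\sum_{z\in QR(q)}\lambda(z)$ is unsubstantiated.

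What the paper actually does at this step is structurally different and is the heart of the argument. It first observes that every solution $a$ to $a^{p^j}=g_j$ maps under $\pi'$ to a unipotent element of $\Sp_{p^j-1}(q)$ of \emph{maximal} order $p^j$, i.e., a single Jordan block, and that this Jordan type is the \emph{same for all solutions} (this is where $2n=p^j+1$ is crucial). It then takes $\chi_W$ to be the degree-$(p^j+1)/2$ Weil representation and invokes Shinoda's explicit character formula \citep[Lemma 2.10 and Corollary 2.13.1]{SHINODA1980251} to get $\chi_W(\pi'(a))=\gamma_1+\gamma_2\sqrt{q}$ with $\gamma_1,\gamma_2\in\BQ^\times$ (when $p\equiv 1\bmod 4$), constant over all solutions $a$. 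This constancy is essential: it means $\sum_{a^{p^j}=g_j}\chi_W(\pi'(a))$ is a positive rational multiple of $\gamma_1+\gamma_2\sqrt{q}$, so that $\beta_{p^j}\sim_\BQ\lVert\gamma_1+\gamma_2\sqrt{q}\rVert^2\notin\BQ$ precisely when $q$ is an odd power of $p$. You do gesture at Weil-type characters in your last sentence, but the two essential ingredients --- the uniform Jordan type of $\pi'(a)$ and Shinoda's explicit value of the Weil character on such elements --- are absent, and without them the proof does not go through.
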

\begin{proof}
  By \cref{prop:quot} we have a surjective group homomorphism $\pi\colon C_G(g_j)\to \Sp_{p^j-1}(q)\times \{\pm 1\}$. Let $\pi'\colon C_G(g_j)\to \Sp_{p^j-1}(q)$ be $\pi$ followed by the coordinate projection.  Consider any solution to $a^{p^j}=g_j$ in $G$ (which forces $a\in C_G(g_j)$), at least one of which exists by \cref{thm:all-res}.  Since $\ker(\pi)$ has exponent $p$ and contains $g_j$, we conclude that $a$ has order $p^{j+1}$ and $\pi(a)$ has order $p^j$. Since $p$ is odd by assumption, we may therefore conclude that $\pi'(a)$ also has order $p^j$.  Moreover, the Sylow $p$-subgroup of $\Sp_{p^j-1}(q)$ has exponent $p^j$, so $\pi'(a)$ has maximum possible order for any solution to $a^{p^j}=g_j$.  By dimension considerations we in fact see that the Jordan form of $\pi'(a)$ is the same for all solutions $a^{p^j}=g_j$, and consists of a single block.

  Let $W$ be the irreducible Weyl representation of $\Sp_{p^j-1}(q)$ of dimension $(p^j+1)/2$.  The element $-I_{p^j-1}$ acts trivially on this module.  Let $\chi_W$ be the character of $W$.  If $a^{p^j}=g_j$, then by \citep[Lemma 2.10 and Corollary 2.13.1]{SHINODA1980251} we conclude that $\chi_W(\pi'(a))=\gamma_1+\gamma_2\sqrt{q}$ such that

  \begin{gather*}
    \gamma_1\in\BQ^\times,\\
    \gamma_2\in i\BQ^\times \mbox{ when } p\equiv 3\bmod 4,\\
    \gamma_2\in\BQ^\times \mbox{ when } p\equiv 1\bmod 4,
  \end{gather*}
  and where $\gamma_1,\gamma_2$ do not depend on the choice of solution $a^{p^j}=g_j$.  We conclude that
  \begin{equation*}
    \alpha\ip{\chi_W\circ\pi'}{\psi_{p^j,g_j}} = \gamma_1+\gamma_2\sqrt{q}
  \end{equation*}
  for some $\alpha\in \BQ^\times$.  Subsequently,
  \begin{equation*}
    \beta_{p^j}(\chi_W\circ\pi',g_j) \sim_\BQ \lVert \gamma_1+\gamma_2\sqrt{q}\rVert^2,
  \end{equation*}
  and this is irrational if and only if $p\equiv 1\bmod 4$ and $q$ is an odd power of $p$.

  As $\chi_W\circ\pi'$ is necessarily an irreducible character of $C_G(g_j)$, this proves the desired claims for $G$.

  Since $C_H([g_j])=C_G(g_j)/Z(G)$ and $Z(G)$ is in the kernel of $\chi_W\circ\pi'$ by the definition of $W$, we immediately obtained the desired claims for $H$, as well.
\end{proof}

As was the case with the Sylow $p$-subgroups, the requirement that the dimension satisfies $2n=p^j+1$ seems unlikely to be necessary in general.  In the preceding proof this assumption was necessary to assure the value of $\chi_W\circ\pi'$ was independent of the choice of solution to $a^{p^j}=g_j$. In the more general case the sign of the $\gamma_2$ term will depend on the choice of $a$, and it is again conceivably possible (in the sense the author was again unable to rule it out) that this could result in things perfectly cancelling out to yield a rational value of $\beta_{p^j}(\chi_W\circ\pi',g_j)$.  We pose the following variation of \cref{q:sylow}.

\begin{question}
  Let $G=\Sp_{2n}(q)$.  Set $r=\lceil\log_p(2n)\rceil$.  Which, if any, of the following are equivalent?
  \begin{enumerate}
    \item $G$ is non-$FSZ_{p^j}$ for some $1\leq j<r$;
    \item $G$ is non-$FSZ_{p^j}$ for all $1\leq j<r$;
    \item $G$ has a non-$FSZ$ Sylow $p$-subgroup.
    \item $r>1$, $q$ is an odd power of $p$, and $p\equiv 1\bmod 4$.
  \end{enumerate}
\end{question}

\bibliographystyle{plainnat}
\bibliography{../references,../refs-symplectic}
\end{document}